\renewcommand{\deg}{{\rm deg}}
\newcommand{\A}{\mathcal A}
\newcommand{\N}{\mathbb N}
\newcommand{\R}{\mathbb R}
\newcommand{\C}{\mathbb C}
\newcommand{\M}{\mathcal M}
\theoremstyle{plain}
\newtheorem*{theorem*}{Theorem}
\newtheorem{theorem}{Theorem}[section]
\newtheorem{corollary}[theorem]{Corollary}
\newtheorem{lemma}[theorem]{Lemma}
\newtheorem{proposition}[theorem]{Proposition}
\theoremstyle{definition}
\newtheorem*{definition*}{Definition}
\newtheorem{definition}[theorem]{Definition}
\theoremstyle{remark}
\newtheorem{remark}[theorem]{Remark}
\newtheorem{example}[theorem]{Example}
\begin{document}

\onehalfspace

\title{Polynomials with and  without determinantal representations}

\author{Tim Netzer}
\address{Tim Netzer, Universit\"at Leipzig, Germany}
\email{netzer@math.uni-leipzig.de}

\author{Andreas Thom}
\address{Andreas Thom, Universit\"at Leipzig, Germany}
\email{thom@math.uni-leipzig.de}

\begin{abstract}  The problem of writing real zero polynomials as determinants of linear matrix polynomials has recently attracted a lot of attention. Helton and Vinnikov \cite{hevi} have proved that any real zero polynomial in two variables has a determinantal representation. Br\"and\'{e}n \cite{bran} has shown that the result does not extend to arbitrary numbers of variables, disproving the generalized Lax conjecture. We prove that in fact {\it almost no} real zero polynomial admits a determinantal representation; there are dimensional differences between the two sets. So the generalized Lax conjecture fails badly. The result follows from a general upper bound on the size of linear matrix polynomials. We then provide a large class of surprisingly simple explicit real zero polynomials that do not have a determinantal representation, improving upon Br\"and\'{e}n's mostly unconstructive result.  We finally characterize polynomials of which some power has a determinantal representation, in terms of an algebra with involution having a finite dimensional representation. We use the characterization to prove that any quadratic real zero polynomial has a determinantal representation, after taking a high enough power. Taking powers is thereby really necessary in general. The representations emerge explicitly, and we characterize  them up to unitary equivalence.\end{abstract}

\maketitle

\tableofcontents

\section{Introduction} A \textit{(hermitian) linear matrix polynomial} (or  \textit{matrix pencil}) $\M$ is an expression of the following form: $$  M_0+x_1M_1+\cdots +x_nM_n,$$ where each $M_i\in{\rm{H}}_k(\C)$ is a complex hermitian $k\times k$-matrix, and $x_1,\ldots,x_n$ are variables. Equivalently, $\M$ can be viewed as a hermitian matrix with  linear polynomials as its entries. We refer to $k$ as the \textit{size} of $\M$. In the special case that all $M_i\in{\rm{Sym}}_k(\R)$ are real symmetric matrices, we call the matrix polynomial \textit{symmetric}.

 Linear matrix polynomials are of importance for example in polynomial optimization. Let $S(\M)$ denote the set of points where $\M$ is positive semidefinite: $$S(\M):=\left\{ a\in\R^n\mid \M(a)\succeq 0\right\}.$$   Such sets are called  \textit{spectrahedra}, and they are precisely the sets on which semidefinite programming can be performed. A great lot of problems from various branches of mathematics can be transformed to semidefinite programming problems, and there exist efficient methods to solve these problems. For more information we refer the reader to \cite{wo} and the references therein. 
  
It is not hard to see that if $0$ belongs to the interior of $S(\M)$, then it is definable  by a linear matrix polynomial (of possibly smaller size) with $M_0$ positive definite (see \cite{rago}, Section 1.4). After conjugation with a unitary matrix we find $M_0=I$, i.e. $\M$ is \textit{monic}.  We will always make this assumption in our work.

It is now an important problem to find out which sets are spectrahedra. Clearly spectrahedra are always convex and closed, and can be defined by simultaneous real polynomial inequalities.  
But there are more necessary conditions. Consider $p=\det\M$, the determinant of the linear matrix polynomial $\M$. It is a real polynomial, both in the hermitian and symmetric case. Now note that the spectrahedron $S(\M)$ can be retrieved from the polynomial $p$ only. It consists of those points $a$ for which $p$ does not have a zero between the origin and $a$ (see Remark \ref{rz} below): $$S(\M)=\left\{ a\in\R^n\mid p_a(t):=p(t\cdot a) \mbox{ has no roots in } [0,1)\right\}.$$ Helton and Vinnikov \cite{hevi} call the set on the right  the \textit{rigidly convex set} defined by $p$, and we will denote it by $S(p)$. 
Since $p$ arises as a determinant of a linear matrix polynomial, it has a  strong property. It fulfills  $p(0)=1$  and $$\forall a\in\R^n \quad p(\mu\cdot a)=0 \Rightarrow \mu\in\R.$$ The second property follows immediately from the fact that hermitian matrices have only real eigenvalues (see also again Remark \ref{rz} below). Polynomials with these two properties are called \textit{real zero polynomials}, or \textit{RZ-polynomials}, for short. 
We have now observed the first result of Helton and Vinnikov: each spectrahedron is of the form $S(p)$, for an RZ-polynomial $p$. This precludes for example a set like $$\{ (a,b)\in\R^2\mid 1-a^4-b^4\geq 0\}$$ from being a spectrahedron, as one easily checks.

A good approach to check whether a set is a spectrahedron is now to first realize it as $S(p)$ for some RZ-polynomial $p$, and then try to realize  $p$ as a determinant of a linear matrix polynomial. Note however that $S(p)$ could of course be a spectrahedron without $p$ being a determinant. It would for example be sufficient to represent some power $p^r$ as a determinant. Even representing a product $q\cdot p$ as a determinant would be enough, as long as $S(q\cdot p)=S(p)$. Also note that an RZ-polynomial can be the determinant of a hermitian linear matrix polynomial without being the determinant of a symmetric linear matrix polynomial. We will see examples of this fact below. 

Finally note that representing a polynomial as a determinant is \textit{always} possible, if one omits the condition that $M_0$ is positive semidefinite. This was proven by Helton, McCullough and Vinnikov \cite{hemcvi} and more elementary by Quarez \cite{qu}. Omitting the condition that each $M_i$ is hermitian makes the problem even simpler, as for example explained in \cite{qu} on page 7. The link to spectrahedra and semidefinite programming is then lost, however. So in our work, a linear matrix polynomial is always hermitian and monic.

Now Helton and Vinnikov \cite{hevi} prove the following remarkable result in the two-dimensional case:

\begin{theorem*} If $p\in\R[x,y]$ is an RZ-polynomial of degree $d$, then $p$ is the determinant of a symmetric linear matrix polynomial of size $d$.
\end{theorem*}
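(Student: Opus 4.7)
The plan is to follow the Helton--Vinnikov strategy, which combines classical projective geometry of plane curves with the analytic theory of theta characteristics on real Riemann surfaces. First I would homogenize $p$ to a real ternary form $P(x,y,z)$ of degree $d$ with $P(0,0,1) = 1$; the RZ hypothesis on $p$ translates into hyperbolicity of $P$ with respect to $e_3 = (0,0,1)$, namely that for every $v \in \R^3$ the univariate polynomial $t \mapsto P(v + t e_3)$ has only real roots. The goal is then to produce real symmetric $d \times d$ matrices $A, B, C$ with $C \succ 0$ and $P = \det(xA + yB + zC)$; dehomogenizing at $z = 1$ and conjugating by $C^{-1/2}$ yields the desired monic symmetric representation.

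Second, by a density-and-compactness argument it suffices to treat the case where the projective curve $\mathcal{C} := \{P = 0\} \subset \mathbb{P}^2(\C)$ is smooth and irreducible. A small generic perturbation of $P$ inside the open cone of hyperbolic forms remains hyperbolic and cuts out a smooth curve; once the smooth case is settled, one normalizes $C = I$ and extracts a limit of monic representations of size $d$ to cover the singular case, since the relevant space of representations is compact.

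The heart of the proof, and its main obstacle, is the smooth case. Dixon's classical theorem parametrizes symmetric determinantal representations of $\mathcal{C}$ (up to $\mathrm{GL}_d$-equivalence) by effective theta characteristics on $\mathcal{C}$, that is, line bundles $\theta$ with $\theta^{\otimes 2} \cong K_\mathcal{C}$ and $H^0(\mathcal{C}, \theta) = 0$. One must exhibit such a $\theta$ that is moreover \emph{real}, i.e.\ fixed by the complex conjugation involution on the Picard variety, and whose associated pencil is \emph{positive definite} at $(0,0,1)$. Here one uses the crucial geometric fact that a smooth hyperbolic real plane curve of degree $d$ is a dividing $M$-curve whose real locus consists of $\lfloor d/2 \rfloor$ nested ovals (plus a pseudo-line when $d$ is odd), and then invokes the analysis of real theta characteristics via Riemann's theta function on the Jacobian of $\mathcal{C}$ to select one with the correct sign.

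Once the right $\theta$ is in hand, the matrices $A, B, C$ are built explicitly from the kernel-bundle construction, which realizes the pencil as the matrix of a natural map between spaces of sections of $\theta$ and $\theta \otimes \mathcal{O}(1)$; reality and definiteness then follow automatically from the reality and positivity properties of $\theta$. The genuinely difficult step is the existence of the real positive theta characteristic, which is where Vinnikov's harmonic analysis on real Jacobians does the essential work; every other step is a geometric reduction around it.
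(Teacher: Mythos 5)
First, a point of reference: this theorem is not proved in the present paper at all --- it is the Helton--Vinnikov theorem, quoted from \cite{hevi} as the starting point for everything else --- so there is no in-paper argument to measure your proposal against. What you have written is a roadmap of the original Helton--Vinnikov proof: homogenize $p$ to a ternary form $P$ hyperbolic with respect to $(0,0,1)$, reduce to the case of a smooth curve, invoke Dixon's correspondence between symmetric determinantal representations of a smooth plane curve and its theta characteristics $\theta$ with $H^0(\theta)=0$ (note these are the \emph{ineffective} ones; your word ``effective'' contradicts the condition you then state), and select a real $\theta$ whose pencil is definite at $(0,0,1)$ using the topology of hyperbolic curves and theta functions on the real Jacobian. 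Two small factual slips aside (the one above, and the fact that smooth hyperbolic curves are dividing but not in general $M$-curves --- a hyperbolic quartic has $2$ nested ovals, an $M$-quartic has $4$), this is a broadly faithful description of how the theorem is actually proved.

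As a proof, however, it has a genuine gap: all of the mathematical content sits in the step you yourself flag as the hard one --- the existence of a real, ineffective theta characteristic whose associated symmetric pencil is \emph{definite} at the hyperbolicity point --- and you do not prove it, you only point to where Vinnikov proves it. In particular, ``reality and definiteness then follow automatically'' is not accurate: reality of $\theta$ only yields a real symmetric pencil up to congruence and sign, and isolating the definite ones is precisely where hyperbolicity, the nested-oval structure, and the theta-function analysis must be used; without that argument nothing forces $C\succ 0$. The reduction to the smooth case also needs more than a remark: one needs density of smooth hyperbolic forms in the hyperbolic cone (Nuij \cite{nu}) and a compactness argument for the approximating monic representations --- bounded norms come from an eigenvalue bound of the kind in Proposition \ref{eigen}, exactly as used in the closedness argument of Theorem \ref{det} --- before a limit pencil can be extracted (positivity of the limit $C$ is then fine, since $\det C=1$). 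So: an accurate outline of the known proof, but the core of the theorem is cited rather than proved.
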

\noindent This shows that each rigidly convex set in $\R^2$ is a spectrahedron. 
As observed by Lewis, Parillo and Ramana \cite{lepara}, the theorem also solves the Lax conjecture, which was originally formulated in a homogenized setup, i.e. for so called \textit{hyperbolic} polynomials.
Helton and Vinnikov already  note that their result cannot  hold  as stated in higher dimensions. A count of parameters shows that there are much more RZ-polynomials of degree $d$ than could possibly be realized as determinants of symmetric linear matrix polynomials of size $d$. The same argument shows that also hermitian matrix polynomials of size $d$ are not enough. Helton and Vinnikov conjectured however that their result is true  if one allows for matrices of size larger than $d$.  Br\"and\'{e}n \cite{bran} has now recently disproved this conjecture. 

His work contains the following results: For a certain subclass of RZ-polynomials of degree $d$ he first proves that the existence of a determinantal representation   implies the existence of a representation of size $d$, both in the hermitian and symmetric case. Since the subclass is still large enough, a count of parameters then implies that many among these polynomials can not have a determinantal representation at all, if the number of variables is large. In a second section he then even produces an explicit RZ-polynomial for which \textit{no power} can have a determinantal representation. The example is constructed from a matroid (the V\'{a}mos cube) that cannot be realized by a subspace arrangement, since failing to fulfill the Ingleton inequalities. His polynomial has $8$ variables and is of degree $4$. 

Our contribution is the following. In Section \ref{size} we examine the possible size of a determinantal representation. We prove some upper  bounds in Theorems \ref{upbound} and \ref{cone}, and some lower bounds in Theorems \ref{boundreal} and \ref{bound}.  In Section \ref{norep} we deduce that almost no real zero polynomial admits a determinantal representation.  In fact there are dimensional differences between the set of real zero polynomials and the set of polynomials with a determinantal representation. This will follow by a count of parameters, using our general upper bound. So the generalized Lax conjecture fails badly.  We will then produce simple and explicit examples of polynomials without determinantal representations. This is in particular interesting, since there is only a single such example  so far (the one from Br\"and\'en's paper). Our examples include polynomials of high degree, compared to the number of variables, and vice versa polynomials in many variables and low degree. There are examples with $S(p)$ compact and non-compact.

 In Section \ref{algebra} we characterize polynomials of which some power has a determinantal representation. For this purpose we construct an algebra with involution, asssociated with the real zero polynomial.  We show that this algebra has a $*$-representation on a finite dimensional Hilbert space, if and only if some power of the polynomial admits a determinantal representation of small size (Theorem \ref{rep}).  Similar algebras have been used before by different authors (see e.g. \cite{he,ro,ch,pa}), in attempts to linearize forms and realize polynomials as minimal polynomials of matrix pencils. Their results relate to our problem, but do not take into account the desire for \textit{hermitian} representations. 
In Section \ref{quadratic} we use our characterization to prove that any quadratic RZ-polynomial admits a determinantal representation, after taking a high enough  power (Theorem \ref{quart}). This shows that any quadratic rigidly convex set is a spectrahedron. Our result also contains an explicit method to construct the  determinantal representations, in contrast to the previous results, which are mostly unconstructive. We finally determine the occuring representations up to unitary equivalence, in Theorem \ref{unique}.

\section{On the size of linear matrix polynomials}\label{size}

We start by proving some results that will be helpful throughout this work. The first and easy pšroposition turns out to be crucial for many of the following results.

\begin{proposition}\label{eigen} Let $\M=I+x_1M_1+\cdots +x_nM_n$ be a linear matrix polynomial and $p:=\det \M$ its determinant. Then for each $a\in\R^n,$ the nonzero eigenvalues of $a_1M_1+\cdots+ a_nM_n$ are in one to one correspondence with the zeros of the univariate polynomial $p_a(t):=p(t\cdot a)$, counting multiplicities. The correspondence is given by the rule $\lambda \mapsto -\frac{1}{\lambda}$.
\end{proposition}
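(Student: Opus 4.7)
The plan is to unfold both sides as univariate polynomials in $t$ using the spectral data of the single matrix $A := a_1M_1 + \cdots + a_nM_n$, and then match roots factor by factor.

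First I would observe that by definition
\[
p_a(t) = \det\bigl(\mathcal M(ta)\bigr) = \det\bigl(I + t(a_1M_1 + \cdots + a_nM_n)\bigr) = \det(I + tA).
\]
Since $A$ is hermitian (being a real linear combination of hermitian matrices), it is unitarily diagonalizable. Let $\lambda_1,\ldots,\lambda_k \in \mathbb R$ be its eigenvalues listed with multiplicity. Then $I + tA$ has eigenvalues $1 + t\lambda_1,\ldots, 1 + t\lambda_k$, so
\[
p_a(t) = \prod_{i=1}^k (1 + t\lambda_i).
\]

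Next I would read off the roots of this product. Each factor with $\lambda_i = 0$ contributes the constant $1$, hence no root. Each factor with $\lambda_i \neq 0$ contributes exactly the root $t = -1/\lambda_i$. Hence the multiset of roots of $p_a(t)$ equals $\{-1/\lambda_i : \lambda_i \neq 0\}$, counted with multiplicity. This is precisely the stated bijection $\lambda \mapsto -1/\lambda$ between nonzero eigenvalues of $A$ and roots of $p_a$, preserving multiplicities.

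There is no real obstacle here; the only thing to be careful about is the bookkeeping of multiplicities and the treatment of the zero eigenvalues of $A$, which must be explicitly excluded because they produce no roots of $p_a$ (they just decrease the degree of $p_a$ below $k$). Hermiticity is used only to ensure $A$ is diagonalizable, but in fact the identity $\det(I+tA) = \prod (1+t\lambda_i)$ also follows for arbitrary $A$ by triangularization, so the argument is robust.
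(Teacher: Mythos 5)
Your proof is correct, and it takes a slightly different route from the paper's. The paper never factors $p_a$: it fixes $a$, writes the characteristic polynomial $c_a$ of $A=a_1M_1+\cdots+a_nM_n$, and uses the reversal identity $c_a(\lambda)=(-\lambda)^k p_a\left(-\frac{1}{\lambda}\right)$ for $\lambda\neq 0$ to transport zeros of $c_a$ to zeros of $p_a$, handling multiplicities by differentiating that identity. You instead expand $p_a(t)=\det(I+tA)=\prod_i(1+t\lambda_i)$ via diagonalization (or triangularization) and read the root correspondence off the factorization. The two arguments are close in spirit, but yours makes the multiplicity bookkeeping completely transparent (each eigenvalue's multiplicity appears as the exponent of its linear factor, and the zero eigenvalues visibly drop out as constant factors), whereas the paper's derivative remark is terser and, for multiplicities larger than two, really amounts to the same factorization argument you give explicitly. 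Conversely, the paper's version is a pure polynomial identity between $c_a$ and $p_a$ that does not invoke the spectral theorem at all; as you note, your argument is equally robust once one replaces diagonalization by triangularization, so nothing essential hinges on hermiticity in this step -- it is only needed later (Remark \ref{rz}) to conclude that the zeros are real.
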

\begin{proof} Fix $a\in\R^n$ and let $c_a$ denote the characteristic polynomial of the hermitian  matrix $a_1M_1+\cdots +a_nM_n$. For any $\lambda\neq 0$ we have \begin{align*} c_a(\lambda) &= \det\left( -\lambda I +a_1M_1+ \cdots + a_nM_n\right) \\ &= (-\lambda)^k p\left(\frac{a}{-\lambda}\right)  = (-\lambda)^k p_a\left(-\frac{1}{\lambda}\right).  \end{align*} We see that each nonzero eigenvalue $\lambda$ of $\M(a)$ gives rise to a zero of $p_a$ by the above defined rule. We also see that each zero of $p_a$ arises in this way, since $0$ is not such a zero. 
Taking the derivative with respect to $\lambda$ in the above equality we see that also the multiplicity of $\lambda$ as a zero of $c_a$ coincides with the multiplicity of $-\frac{1}{\lambda}$  as a zero of $p_a$.  
\end{proof}

\begin{remark}\label{rz} (i) We  see that no $p_a$ can have a complex zero. Any such zero would give rise to a complex eigenvalue of a hermitian matrix, which is impossible. This shows that the determinant $p$ is indeed a real zero polynomial.

(ii) We also see that
$\M(a)$ is positive semidefinite if and only if the polynomial $p_a$ has no zeros in the interval $[0,1)$. This proves $S(\M)=S(p)$, as mentioned in the introduction.
\end{remark}

\begin{corollary} \label{rank} Let $\M$ be a linear matrix polynomial and assume $p=\det\M$ is of degree $d$. Then each matrix in the real vector space $$V_\M:={\rm{span}}_\R\left\{M_1,\ldots,M_n\right\}$$ has rank at most $d$, and the generic linear combination has rank precisely $d$.
\end{corollary}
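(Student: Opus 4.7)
The plan is to reduce the corollary directly to Proposition \ref{eigen}, using the standard fact that the rank of a hermitian matrix equals the number of its nonzero eigenvalues, counted with multiplicity.

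Fix $a\in \R^n$ and write $N_a := a_1M_1+\cdots+a_nM_n$. Proposition \ref{eigen} provides a multiplicity-preserving bijection $\lambda\mapsto -1/\lambda$ between the nonzero eigenvalues of $N_a$ and the roots of $p_a(t):=p(t\cdot a)$. Since $p_a(0) = p(0) = 1 \neq 0$, none of the roots of $p_a$ are zero, and by Remark \ref{rz}(i) every root of $p_a$ is real. Hence $p_a$ splits completely into real linear factors, so the total number of its roots counted with multiplicity is exactly $\deg p_a$. Combining these observations gives the key identity
$$ \mathrm{rank}(N_a) \; = \; \deg p_a. $$

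The upper bound is then immediate: $\deg p_a \leq \deg p = d$ for every $a$, so $\mathrm{rank}(N_a)\leq d$. For the generic statement, let $p^{(d)}$ denote the degree-$d$ homogeneous component of $p$, which is a nonzero polynomial in $x_1,\ldots,x_n$ precisely because $\deg p = d$. From $p_a(t) = \sum_{i=0}^d p^{(i)}(a)\,t^i$ one reads off that $\deg p_a = d$ is equivalent to $p^{(d)}(a)\neq 0$. This condition cuts out a Zariski open dense subset of $\R^n$, on which $\mathrm{rank}(N_a) = d$.

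There is essentially no serious obstacle here: once Proposition \ref{eigen} is in hand, the entire argument is a direct book-keeping of eigenvalue multiplicities. The only small subtlety is to ensure that $0$ never appears as a root of $p_a$, which is exactly what the normalization $p(0) = 1$ (i.e.\ $\mathcal{M}$ being monic) guarantees; without this one could not pass from ``number of nonzero eigenvalues'' to ``$\deg p_a$'' without losing a term.
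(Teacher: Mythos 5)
Your proof is correct and follows essentially the same route as the paper: rank equals the number of nonzero eigenvalues, which via Proposition \ref{eigen} equals the number of zeros of $p_a$, bounded by (and generically equal to) $d$. You merely make explicit a few points the paper leaves implicit (that $p_a(0)=1$ excludes zero roots, and that genericity means non-vanishing of the top homogeneous part $p^{(d)}$), which is fine.
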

\begin{proof}The rank of any matrix $a_1M_1+ \cdots +a_nM_n$ is the number of its nonzero eigenvalues, which by Proposition \ref{eigen} correspond to the zeros of the univariate polynomial $p_a$.  Now each $p_a$ has degree at most $d$, and thus at most $d$ zeros. For all $a$ for which $p_a$ has degree precisely $d$, the matrix is of rank precisely $d$. This is true for the generic choice of $a$.\end{proof}

The following result gives a general upper bound on the size of determinantal representations.

\begin{theorem}\label{upbound}
 Let $p\in\R[x_1,\ldots,x_n]$ be an RZ-polynomial of degree $d$. If $p$ has a symmetric/hermitian determinantal representation, then it has a symmetric/hermitian determinantal representation  of size $nd$.
\end{theorem}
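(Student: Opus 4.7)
The plan is to start from any monic hermitian (resp.\ symmetric) determinantal representation $\M = I_k + x_1M_1 + \cdots + x_n M_n$ of $p$ (of some possibly huge size $k$) and cut it down by noting that the matrices $M_i$ collectively act nontrivially only on a subspace of dimension at most $nd$.

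Concretely, I would set
$$W := \operatorname{ran}(M_1) + \cdots + \operatorname{ran}(M_n) \subseteq \C^k.$$
By Corollary \ref{rank}, each $M_i$ has rank at most $d$, so $\dim W \le nd$. The key observation is that, because each $M_i$ is hermitian and its range lies inside $W$, one has $W^\perp \subseteq \ker(M_i)$: indeed, for $v \in W^\perp$ the vector $M_i v$ lies in $\operatorname{ran}(M_i) \subseteq W$, and for any $w \in W$ we compute $\langle M_i v, w\rangle = \langle v, M_iw\rangle = 0$ since $M_iw \in W$ and $v \in W^\perp$; hence $M_iv \in W \cap W^\perp = \{0\}$. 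Therefore $M_i$ vanishes on $W^\perp$ and preserves $W$, and, choosing an orthonormal basis of $\C^k$ compatible with the decomposition $\C^k = W \oplus W^\perp$, we may write
$$\M \;=\; \begin{pmatrix} I_W + x_1 M_1|_W + \cdots + x_n M_n|_W & 0 \\ 0 & I_{W^\perp} \end{pmatrix}.$$

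Taking determinants gives $p = \det\bigl(I_W + \sum_i x_i M_i|_W\bigr)$, so the top-left block is already a monic hermitian linear matrix polynomial of size $\dim W \le nd$ whose determinant is $p$. To achieve size \emph{exactly} $nd$, one simply pads by $I_{nd - \dim W}$ and obtains
$$\M' \;=\; \begin{pmatrix} I_W + \sum_i x_i M_i|_W & 0 \\ 0 & I_{nd - \dim W} \end{pmatrix},$$
which is again a monic hermitian linear matrix polynomial with $\det \M' = p$. The symmetric case is identical, carried out over $\R$: the subspace $W$ is then a real subspace of $\R^k$, and the change of basis is orthogonal, so the resulting $M_i|_W$ remain real symmetric.

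There is really no hard step here; everything reduces to the rank bound from Corollary \ref{rank} and the elementary fact that a hermitian matrix whose range sits inside $W$ must vanish on $W^\perp$. The one place that could potentially cause trouble is verifying that the block decomposition genuinely happens simultaneously for all $M_i$ (i.e.\ that $W$ is a \emph{common} invariant subspace), but this is handled in one line once $W$ is defined as the \emph{sum} of the ranges rather than any individual one.
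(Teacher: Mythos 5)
Your proposal is correct and is essentially the paper's own argument: by hermiticity $\ker(M_i)=\ran(M_i)^\perp$, so your subspace $W=\ran(M_1)+\cdots+\ran(M_n)$ is exactly the orthogonal complement of the common kernel $K_1\cap\cdots\cap K_n$ that the paper splits off, and in both cases Corollary \ref{rank} supplies the rank bound and a unitary (resp.\ orthogonal) change of basis produces the size-$nd$ representation. The only cosmetic difference is that you work with sums of ranges and pad with an identity block, while the paper works with intersections of kernels and the dimension formula.
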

\begin{proof}
Assume $$p=\det\left( I+x_1M_1+\ldots +x_nM_n\right)$$ for some matrices $M_i\in{\rm H}_k(\C)$. Let $K_i\subseteq \C^k$ be the kernel of the linear map defined by $M_i$. By Corollary \ref{rank} we find $\dim_\C K_i\geq k-d$ for all $i$. An easy induction argument involving the dimension formula for subspaces yields $$\dim\left(K_1\cap\ldots\cap K_n\right)\geq k-nd.$$ So if $k>nd$ we can simultaneously split off a $k-nd$ block of zeros of each $M_i$, by conjugation with a unitary matrix. This produces a determinantal representation of $p$ of size $nd.$ The same argument works with symmetric matrices and an orthogonal base change.
\end{proof}

\begin{remark}
Note that  Theorem \ref{upbound} not only shows that there is always a relatively small determinantal representation, but in fact that \textit{each} determinantal representation is relatively small. Larger representations only arise as trivial extensions of small ones.\end{remark}

We now want to proof that there is always a determinantal representation of very small size, if the spectrahedron contains a full dimensional cone. We need the following proposition:

\begin{proposition}\label{red} Let $V\subseteq{\rm{H}}_k(\C)$ be an $\R$-subspace of hermitian matrices, such that all elements of $V$ have rank at most $d$. If $V$ contains a positive semidefinite matrix of rank $d$, then there is some unitary matrix $Q\in M_{k}(\C)$ such that $$Q^*VQ \subseteq\left\{ \left(\begin{array}{c|c}A & 0 \\\hline 0 & 0\end{array}\right)\mid A\in {\rm{H}}_d(\C)\right\}.$$  If $V\subseteq{\rm{Sym}}_k(\R)$, then $Q$ can be chosen real orthogonal.
\end{proposition}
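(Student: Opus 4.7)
The plan is to first normalize the positive semidefinite rank $d$ element of $V$ by a unitary change of basis, and then use the rank bound on arbitrary linear combinations to force the off-diagonal and bottom-right blocks of every other element of $V$ to vanish.

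Concretely, let $P\in V$ be positive semidefinite of rank $d$. By the spectral theorem there is a unitary $Q\in M_k(\C)$ (respectively, a real orthogonal $Q$ if $V\subseteq {\rm Sym}_k(\R)$) such that $Q^*PQ=\left(\begin{smallmatrix} A & 0 \\ 0 & 0\end{smallmatrix}\right)$, with $A\in{\rm H}_d(\C)$ positive definite. After replacing $V$ by $Q^*VQ$ we may assume $P$ itself is already of this form. For an arbitrary $M\in V$, written in block form as
\[
M=\begin{pmatrix} M_{11} & M_{12} \\ M_{12}^* & M_{22}\end{pmatrix},
\]
I want to show $M_{12}=0$ and $M_{22}=0$. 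By hypothesis every element of $V$ has rank at most $d$, so in particular $P+tM\in V$ has rank $\le d$ for every $t\in\R$.

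For $t$ sufficiently small in absolute value, the top-left block $A+tM_{11}$ is still positive definite, hence invertible. The Schur complement formula then gives
\[
{\rm rank}(P+tM)=d+{\rm rank}\bigl(tM_{22}-t^2M_{12}^*(A+tM_{11})^{-1}M_{12}\bigr).
\]
The rank constraint therefore forces $tM_{22}=t^2M_{12}^*(A+tM_{11})^{-1}M_{12}$ for all small $t$. Dividing by $t$ and letting $t\to 0$ yields $M_{22}=0$; feeding this back and letting $t\to 0$ in $M_{12}^*(A+tM_{11})^{-1}M_{12}=0$ gives $M_{12}^*A^{-1}M_{12}=0$, and since $A^{-1}$ is positive definite this forces $M_{12}=0$. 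Thus $M$ has the claimed block form, and since $M\in V$ was arbitrary the proposition follows.

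The only potentially delicate step is the Schur complement rank computation, but it is completely standard once we know $A+tM_{11}$ is invertible for small $t$; the rest is just a limit argument exploiting that $A$, and hence $A^{-1}$, is strictly positive definite. The symmetric/real case requires no change beyond choosing $Q$ orthogonal in the initial spectral decomposition of $P$.
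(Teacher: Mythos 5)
Your argument is correct, and it reaches the conclusion by a somewhat different mechanism than the paper. Both proofs share the same skeleton: normalize the rank-$d$ positive semidefinite element so it becomes $\left(\begin{smallmatrix} A & 0\\ 0 & 0\end{smallmatrix}\right)$ with $A\succ 0$, then exploit the rank bound along the pencil spanned by it and an arbitrary $M\in V$ (your $P+tM$ with $t\to 0$ is, after rescaling, the paper's $M+\lambda P$ with $\lambda\to\infty$, since rank is scale-invariant). The difference is in how the rank constraint is cashed in. You invoke the Guttman/Schur-complement rank identity
\[
{\rm rank}\begin{pmatrix} X & Y\\ Y^* & W\end{pmatrix}={\rm rank}(X)+{\rm rank}\bigl(W-Y^*X^{-1}Y\bigr)\qquad (X \text{ invertible}),
\]
which immediately forces the Schur complement $tM_{22}-t^2M_{12}^*(A+tM_{11})^{-1}M_{12}$ to vanish for small $t$; the two limits $t\to 0$ then kill $M_{22}$ and, via positive definiteness of $A^{-1}$, also $M_{12}$ (in fact a single nonzero small $t$ already gives $M_{12}=0$, since $(A+tM_{11})^{-1}\succ 0$). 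The paper instead avoids the Schur complement formula: it borders the top-left block by one row and column, expresses the last column of the resulting $(d+1)\times(d+1)$ submatrix as a combination of the first $d$ columns for large $\lambda$, and squeezes the coefficient vector using $v^*(A+\lambda P)v=\overline{c}$. Your route is shorter and more standard once the rank identity is granted; the paper's is more elementary and self-contained. Your handling of the real symmetric case (orthogonal diagonalization, everything else unchanged) matches the paper.
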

\begin{proof}
After a unitary/orthogonal change of coordinates we can assume that $V$ contains a matrix $$ P'=\left(\begin{array}{c|c} P & 0 \\\hline 0 & 0\end{array}\right)$$ where $P$ is a positive definite matrix of size $d$. Let $A'$ be an arbitrary matrix from $V$ and write $$A'=\left(\begin{array}{c|c}A & B \\\hline B^* & C\end{array}\right)$$ We have to show $B=0$ and $C=0$.

 We know that $A'+\lambda P'$ has rank at most $d$ for all $\lambda\in\R$, and the upper left block of size $d$ in this matrix has arbitrary large eigenvalues, for $\lambda$ big enough.
Consider any quadratic submatrix of size $d+1$ of $A'+\lambda P'$, containing this upper left block, obtained by deleting the same set of rows and columns:

$$ \left(\begin{array}{c|c}A+\lambda P & b \\\hline b^* & c\end{array}\right)$$ Here $b\in\C^d$ is a certain column of $B$, and $c$ is the corresponding diagonal entry of $C$. From the rank condition we see that the last column in this matrix is a linear combination of the first $d$ columns, at least for $\lambda\in\R$ big enough. If $v=(v_1,\ldots,v_d)^t$ is the vector of coefficients of this linear combination, we have $$ (A+\lambda P)v=b \mbox{ and } b^*v=c,$$ which implies $v^*(A+\lambda P)v=\overline{c}.$ This means that for large values of $\lambda$, the norm of $v$ must be arbitrary small. But his is only compatible with the condition $b^*v=c$ if $c=0$. Since $A+\lambda P$ is positive definite, this then implies $v=0$, and thus $b=0$.
We have now shown $B=0$, and this implies $C=0$, using again the rank condition for large values of $\lambda$.
\end{proof}

\noindent
Br\"and\'en has shown that an RZ-polynomial that arises as a shift of a hyperbolic polynomial always admits a very small determinantal representation, if it admits any at all. For such polynomials, the set $S(p)$ always contains a full dimensional cone. So the following is a generalization of  Theorem 2.2 from \cite{bran}:

\begin{theorem}\label{cone} Let $\mathcal{M}$ be a hermitian/symmetric linear matrix polynomial and let $d$ denote the degree of $p=\det\mathcal{M}$. If the spectrahedron defined by $\mathcal{M}$ contains a full dimensional cone, then $p$ can be realized as the determinant of a hermitian/symmetric matrix polynomial of size $d$.
\end{theorem}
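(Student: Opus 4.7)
The plan is to combine Corollary \ref{rank} (which bounds the rank of elements of $V_\M$) with Proposition \ref{red} (which produces a simultaneous block decomposition once a positive semidefinite element of maximal rank is present). The role of the full dimensional cone is to supply such a positive semidefinite matrix inside $V_\M$.

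First I would produce a good direction $a\in\R^n$. By hypothesis $S(\M)$ contains a full dimensional cone, so its recession cone $C$ is also full dimensional. For any $a\in C$ we have $t\cdot a\in S(\M)$ for all sufficiently large $t\geq 0$, i.e. $I+t(a_1M_1+\cdots+a_nM_n)\succeq 0$ for $t\gg 0$. All eigenvalues of $M_a:=a_1M_1+\cdots+a_nM_n$ are therefore bounded below by $-1/t$ for arbitrarily large $t$, forcing $M_a\succeq 0$. Thus the $\R$-linear map $a\mapsto M_a$ sends $C$ into the positive semidefinite cone inside $V_\M$.

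Next I would arrange that $M_a$ has rank exactly $d$. By Corollary \ref{rank}, the set $U=\{a\in\R^n\mid \mathrm{rank}(M_a)=d\}$ is Zariski-open and nonempty. Since $C$ is full dimensional (has nonempty Euclidean interior), $U\cap C$ is nonempty. Pick any $a\in U\cap C$; then $M_a\in V_\M$ is positive semidefinite of rank exactly $d$.

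Now I would apply Proposition \ref{red} to $V=V_\M\subseteq\mathrm{H}_k(\C)$ (respectively $\mathrm{Sym}_k(\R)$). The hypotheses of that proposition are satisfied: every element of $V$ has rank at most $d$ by Corollary \ref{rank}, and $M_a$ is the required positive semidefinite element of rank $d$. So there exists a unitary (resp.\ orthogonal) $Q$ such that
\[
Q^*M_iQ=\begin{pmatrix} N_i & 0 \\ 0 & 0 \end{pmatrix},\qquad N_i\in\mathrm{H}_d(\C)\ (\text{resp.\ }\mathrm{Sym}_d(\R)),
\]
for $i=1,\ldots,n$. Since $Q^*IQ=I$, conjugation by $Q$ turns $\M$ into a block diagonal matrix polynomial $\mathrm{diag}(I_d+\sum_i x_iN_i,\,I_{k-d})$, and taking determinants gives $p=\det\bigl(I_d+x_1N_1+\cdots+x_nN_n\bigr)$, a hermitian (resp.\ symmetric) determinantal representation of $\M$ of size $d$.

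The only nontrivial obstacle is the first paragraph: verifying that the cone condition forces $V_\M$ to contain a positive semidefinite element and not merely a matrix of rank $d$. This is where I would need to be careful about the difference between a cone based at the origin and a translated cone, and to use the recession cone rather than the cone itself. Once this is done, everything else is a direct invocation of the earlier results.
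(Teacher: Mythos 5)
Your proposal is correct and follows essentially the same route as the paper's proof: a direction $a$ whose positive half-ray lies in $S(\M)$ gives $a_1M_1+\cdots+a_nM_n\succeq 0$, genericity from Corollary \ref{rank} lets you choose such an $a$ with rank exactly $d$, and Proposition \ref{red} then splits off the zero block. Your recession-cone and Zariski-openness details merely make explicit what the paper leaves implicit.
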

\begin{proof}
If the whole positive half-ray through some $a\in\R^n$ is contained in the spectrahedron, then $a_1M_1+\cdots+a_nM_n$ is positive semidefinite. Since the generic linear combination  has rank $d$, there is such $a$ for which $a_1M_1+\cdots+a_nM_n$ has rank $d$. Now apply Proposition \ref{red} to reduce the size of $\mathcal{M}$ to $d$, without changing the determinant. 
\end{proof}

\begin{remark}
Note that we have shown in the proof of Theorem \ref{cone} that indeed \textit{any} large determinantal representation of $p$ arises as a trivial extension of a small one. This means that there can be no degree canceling when computing the determinant, except for trivial reasons.
\end{remark}

After proving upper bounds on the size of a linear matrix polynomial, we prove some lower bounds as well. 
We use results on spaces of symmetric matrices of low rank, of which there are plenty in the literature.
The following is the main result from Meshulam \cite{mes}, stated in the terminology of Loewy and Radwan \cite{lora}.

\begin{theorem}\label{meshu} Let $V\subseteq \rm{Sym}_k(\R)$ be a linear subspace such that all elements of $V$ have rank at most $d$. Then $$\dim V \leq \alpha(k,d)$$ which computes as follows.  For $d=2e$ even: $$ \alpha(k,d) = \left\{\begin{array}{ll} \binom{d+1}{2}  & \mbox{ if } 2k\leq 5e+1  \\  \binom{e+1}{2} +  e(k -e)  & \mbox{ if } 2k> 5e+1. \end{array}\right. $$ For $d=2e+1$ odd: $$ \alpha(k,d) = \left\{\begin{array}{ll} \binom{d+1}{2}  & \mbox{ if } 2k\leq 5(e+1)  \\  \binom{e+1}{2} +  e(k -e) +1  & \mbox{ if } 2k> 5(e+1). \end{array}\right. $$ 
\end{theorem}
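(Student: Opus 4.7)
The statement is Meshulam's theorem, cited from \cite{mes}, so my plan is to reconstruct the essentials of that argument rather than to find a fundamentally new proof. The strategy splits naturally into exhibiting the bound (lower bound) and establishing it as a ceiling (upper bound).

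For the lower bound, I would give two explicit constructions. In the regime $2k \leq 5e+1$, take $V$ to be all symmetric matrices supported on a fixed $d \times d$ principal block; this has dimension $\binom{d+1}{2}$ and every element has rank at most $d$. In the regime $2k > 5e+1$, take block matrices
\[ \begin{pmatrix} A & B \\ B^t & 0 \end{pmatrix} \]
with $A \in \textrm{Sym}_e(\R)$ and $B$ an $e \times (k-e)$ real matrix; the rank of such a matrix is at most $2e = d$, giving $\binom{e+1}{2} + e(k-e)$ dimensions. For $d = 2e+1$ odd, adjoin a single rank-one matrix supported on an extra diagonal entry to gain the $+1$. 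A short check shows these two quantities agree exactly at $2k = 5e+1$ in the even case and $2k = 5(e+1)$ in the odd case, justifying the stated threshold.

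For the upper bound, the plan is to pick a generic element $X \in V$ of maximal rank $r \leq d$. After orthogonal conjugation, write $X = \mathrm{diag}(x_1, \ldots, x_r, 0, \ldots, 0)$ and decompose every $Y \in V$ in blocks of sizes $r$ and $k-r$:
\[ Y = \begin{pmatrix} Y_{11} & Y_{12} \\ Y_{12}^t & Y_{22} \end{pmatrix}. \]
Since $X + tY$ has rank at most $d$ for all real $t$, every $(d+1) \times (d+1)$ minor of $X + tY$ must vanish identically in $t$. Expanding such minors along the leading rows (where $X$ contributes invertible diagonal terms) gives, after letting $t \to 0$ and $t \to \infty$, polynomial relations among the entries of $Y_{22}$ and quadratic relations in the rows of $Y_{12}$. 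In particular, when $r = d$ the first-order analysis forces $Y_{22} = 0$, which is precisely the content of the paper's Proposition \ref{red}. The remaining constraints on $Y_{12}$ cut out a subspace whose dimension is controlled by a bilinear-forms argument.

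The main obstacle, and the place where Meshulam's technique is needed, is to turn this local analysis into the sharp global count, especially when the generic rank $r$ is strictly less than $d$ so that Proposition \ref{red} does not apply directly. One would induct on $d$, using the above rank-decreasing manipulations to reduce to either a smaller $d$ or to a space of strictly smaller ambient size $k$, and track carefully how much dimension is lost at each reduction. The delicate crossover at $2k = 5e+1$ reflects the fact that, depending on $k$, the extremal space either concentrates on a $d \times d$ block or spreads into off-diagonal rows; any clean proof must handle both extremals and the boundary between them as parallel cases, and this case analysis (together with the small parity adjustment for odd $d$) is the real technical work.
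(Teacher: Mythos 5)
You should first note how the paper itself treats this statement: it is not proved there at all, but quoted verbatim as the main result of Meshulam \cite{mes}, in the formulation of Loewy and Radwan \cite{lora}. So the only legitimate targets for your write-up are either the citation itself or a genuine reconstruction of Meshulam's argument, and your proposal does not deliver the latter. The statement to be proved is solely the upper bound $\dim V\leq \alpha(k,d)$; your extremal constructions (the $d\times d$ block space, the space of matrices $\left(\begin{smallmatrix} A & B\\ B^t & 0\end{smallmatrix}\right)$, the $+1$ for odd $d$) are correct and explain why $\alpha(k,d)$ has its two regimes and why the crossover sits at $2k=5e+1$, but they are not part of the statement and do not advance the proof of the inequality.

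For the upper bound itself, what you actually establish is only the first step: if $V$ contains an element $X$ of rank exactly $d$, then in coordinates where $X=\operatorname{diag}(D,0)$ with $D$ invertible, the Schur complement of $X+tY$ gives $Y_{22}=0$ for every $Y\in V$. (Even here, saying this is ``precisely the content of Proposition \ref{red}'' is slightly off: that proposition assumes the rank-$d$ element is positive semidefinite and concludes in addition that $Y_{12}=0$; in the indefinite case $Y_{12}$ need not vanish, which is exactly why $\alpha(k,d)$ can exceed $\binom{d+1}{2}$.) From $Y_{22}=0$ to the sharp count $\binom{e+1}{2}+e(k-e)$ (plus $1$ for odd $d$), your text offers no argument: how the relations $Y_{12}^{t}(D+tY_{11})^{-1}Y_{12}=0$ bound the dimension of the $Y_{12}$-part together with the $Y_{11}$-part, what to do when the maximal rank in $V$ is strictly less than $d$, how the induction on $d$ or $k$ is set up, and how the two extremal regimes and the threshold $2k=5e+1$ emerge from that induction are all deferred with the remark that this ``is the real technical work.'' That deferred part is the entire theorem, so as written the proposal is a plan rather than a proof; if you do not intend to reproduce Meshulam's counting argument in full, the honest course is to do what the paper does and cite \cite{mes} (and \cite{lora} for the formulation) for this bound.
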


To be able to apply this result, we note the following easy and probably well-known fact:
\begin{lemma}\label{dimension} Let $\M=I+x_1M_1+\cdots +x_nM_n$ be a  linear matrix polynomial.
If $S(\M)$ does not contain a full line, then $M_1,\ldots,M_n$ are $\R$-linearly independent. \end{lemma}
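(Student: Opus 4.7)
The plan is to prove the contrapositive: if $M_1,\ldots,M_n$ are $\R$-linearly dependent, then $S(\M)$ contains a full line. This should be almost immediate from the definition of $\M$ and the fact that $M_0 = I$ is positive definite.

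First I would assume that there exist real numbers $a_1,\ldots,a_n$, not all zero, with $a_1 M_1 + \cdots + a_n M_n = 0$, and let $a = (a_1,\ldots,a_n) \in \R^n \setminus \{0\}$. Then I would evaluate $\M$ along the line $\R \cdot a = \{t a : t \in \R\}$:
\begin{equation*}
\M(ta) = I + t(a_1 M_1 + \cdots + a_n M_n) = I
\end{equation*}
for every $t \in \R$. Since the identity matrix is positive definite, the entire line $\R \cdot a$ is contained in $S(\M)$, contradicting the hypothesis.

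There is no real obstacle here; the content of the lemma is simply that the only way to produce a line in $S(\M)$ through the origin is to have a linear dependence among the $M_i$, and any line in $S(\M)$ (even one not through the origin) already forces such a dependence because $\M(0) = I \succ 0$ pins down one endpoint of admissible directions. I would phrase the argument in the contrapositive form above so that the proof is two or three lines.
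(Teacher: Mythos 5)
Your proof is correct and takes essentially the same route as the paper: both argue the contrapositive, using a linear dependence among $M_1,\ldots,M_n$ to produce a full line in $S(\M)$. Your version is if anything slightly more explicit --- you evaluate $\M(ta)=I$ directly along the dependence direction $a$, whereas the paper phrases the same fact by viewing $S(\M)$ as the preimage of a spectrahedron in $\R^{n-1}$ under a linear map with nontrivial kernel.
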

\begin{proof}Assume that  some $M_i$ is an $\R$-linear combination of the other $M_j$, and replace it by this linear combination. We  see that  $S(\M)$ is the inverse image under a linear map of some nonempty spectrahedron in $\R^{n-1}$. It thus contains a full line. 
\end{proof}

The following result now shows that under a mild compactness assumption, \textit{no} polynomial has a  very small determinantal representation, if the number of variables is large enough. This can also  be seen as a stricter version of the already explained count of parameters argument by Helton and Vinnikov.

\begin{theorem} \label{boundreal} Let $\mathcal{M}$ be a symmetric linear matrix polynomial of size $k$, defining a spectrahedron in $\R^n$ that does not contain a full line. Let $d$ denote the degree of $p=\det\mathcal{M}$ and assume  $n> \binom{d+1}{2}.$ 
 If $d$ is even  then  $$ k\geq \frac{2n}{d} +\frac{d-2}{4},$$ if $d$ is odd then $$ k\geq \frac{2(n-1)}{d-1} +\frac{d-3}{4}.$$ 
\end{theorem}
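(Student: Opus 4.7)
The proof is essentially a direct application of the three preliminary results already at hand, combined with an arithmetic manipulation. Here is how I would proceed.

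First, I would form the real subspace $V_\M = \mathrm{span}_\R\{M_1,\ldots,M_n\}\subseteq \mathrm{Sym}_k(\R)$. By Corollary \ref{rank}, every matrix in $V_\M$ has rank at most $d$. By Lemma \ref{dimension}, the hypothesis that $S(\M)$ contains no full line forces $M_1,\ldots,M_n$ to be $\R$-linearly independent, so $\dim_\R V_\M = n$. Theorem \ref{meshu} then yields the bound $n \leq \alpha(k,d)$.

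Next, I would use the hypothesis $n > \binom{d+1}{2}$ to rule out the first case of Meshulam's formula: in that case one would have $\alpha(k,d) = \binom{d+1}{2} < n$, contradicting $n \leq \alpha(k,d)$. Hence we are always in the second case, and
\begin{equation*}
n \leq \binom{e+1}{2} + e(k-e) + \varepsilon,
\end{equation*}
where $e = \lfloor d/2\rfloor$ and $\varepsilon\in\{0,1\}$ according to the parity of $d$.

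Finally I would solve the inequality above for $k$. Rearranging gives
\begin{equation*}
k \geq e + \frac{n-\varepsilon - \binom{e+1}{2}}{e} = \frac{n-\varepsilon}{e} + \frac{e-1}{2}.
\end{equation*}
Substituting $e = d/2$ with $\varepsilon=0$ in the even case produces $k \geq \tfrac{2n}{d} + \tfrac{d-2}{4}$, and $e = (d-1)/2$ with $\varepsilon=1$ in the odd case produces $k \geq \tfrac{2(n-1)}{d-1} + \tfrac{d-3}{4}$, which are exactly the claimed bounds.

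There is no real obstacle here; the whole argument is bookkeeping on top of the deep ingredient, which is Meshulam's bound on spaces of symmetric matrices of bounded rank (Theorem \ref{meshu}). The only minor pitfalls are checking that the hypothesis $n>\binom{d+1}{2}$ does indeed force the second regime in both the even and odd formulas, and carrying out the arithmetic simplification of $\binom{e+1}{2}$ cleanly so that the final constants $\tfrac{d-2}{4}$ and $\tfrac{d-3}{4}$ come out correctly.
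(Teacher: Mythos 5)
Your proposal is correct and follows exactly the paper's argument: Corollary \ref{rank}, Lemma \ref{dimension} and Theorem \ref{meshu} give $n\leq\alpha(k,d)$, the hypothesis $n>\binom{d+1}{2}$ forces the second regime of Meshulam's bound, and solving for $k$ yields the stated inequalities. You have simply written out the "straightforward computation" that the paper leaves implicit, and your arithmetic checks out in both parity cases.
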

\begin{proof}  From Corollary \ref{rank}, Theorem \ref{meshu} and Lemma \ref{dimension} we obtain $n\leq\alpha(k,d)$. The result is now just a straightforward computation.
\end{proof}

\begin{remark}
Note that Lemma \ref{dimension} immediately implies $n\leq \binom{k+1}{2}$ in the setup of Theorem \ref{boundreal}. This however only gives a lower bound on $k$ depending on  the square root of $n$.
\end{remark}

\noindent
To obtain similar results for hermitian matrices we need some more preparation.

\begin{lemma}\label{herm} For $M\in{\rm{H}}_k(\C)$  write $M=R+iS$ with a real symmetric matrix $R$ and a real skew-symmetric matrix $S$. Define $$\widetilde{M}=\left(\begin{array}{c|c}R & S \\\hline -S & R\end{array}\right),$$ a real symmetric matrix of size $2k$. Then $\widetilde{M}$ has the same eigenvalues as $M$, with double multiplicities.\end{lemma}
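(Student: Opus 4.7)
The plan is to exhibit a unitary $U \in M_{2k}(\C)$ that block-diagonalizes $\widetilde{M}$ as $\bar{M} \oplus M$, and then observe that $M$ and $\bar{M}$ have identical spectra because $M$ is hermitian (hence has real eigenvalues, and $\bar{M} = M^{t}$ has the same eigenvalues as $M$). The conclusion about double multiplicities is then immediate.

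The concrete candidate is
\[
U = \frac{1}{\sqrt{2}}\begin{pmatrix} I & I \\ -iI & iI \end{pmatrix}, \qquad U^{*} = \frac{1}{\sqrt{2}}\begin{pmatrix} I & iI \\ I & -iI \end{pmatrix},
\]
which one checks satisfies $U^{*}U = I$. Using $M = R + iS$ and $\bar{M} = R - iS$, a direct block multiplication gives
\[
\widetilde{M}\,U = \frac{1}{\sqrt{2}}\begin{pmatrix} R - iS & R + iS \\ -i(R - iS) & i(R + iS)\end{pmatrix},
\]
after which a second multiplication by $U^{*}$ on the left yields
\[
U^{*}\,\widetilde{M}\,U = \begin{pmatrix} \bar{M} & 0 \\ 0 & M \end{pmatrix}.
\]
Since unitary conjugation preserves the spectrum with multiplicities, the eigenvalues of $\widetilde{M}$ are the union (with multiplicity) of those of $M$ and those of $\bar{M}$. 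As $M$ is hermitian, its eigenvalues are real, and $\bar{M} = \overline{M^{*}} = M^{t}$ shares the characteristic polynomial of $M$, so each eigenvalue of $M$ appears with doubled multiplicity in $\widetilde{M}$.

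There is no real obstacle here beyond guessing the right $U$ and carrying out the block arithmetic; the lemma amounts to the standard realification of a complex hermitian operator, and the only point requiring care is the sign convention hidden in the definition of $\widetilde{M}$ (with $+S$ in the upper right and $-S$ in the lower left), which is what forces the particular form of $U$ above rather than the more familiar $\frac{1}{\sqrt 2}\bigl(\begin{smallmatrix} I & iI \\ iI & I\end{smallmatrix}\bigr)$.
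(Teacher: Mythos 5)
Your proof is correct, but it takes a genuinely different route from the paper. The paper argues at the level of eigenvectors: for each eigenvalue $\lambda$ of $M$ with eigenvector $z=a+ib$ it exhibits the two real eigenvectors $(-a,b)^t$ and $(b,a)^t$ of $\widetilde{M}$, and then gets the multiplicity statement by checking that $\C$-linearly independent $z_1,\ldots,z_m$ produce $2m$ $\R$-linearly independent vectors in $\R^{2k}$. You instead block-diagonalize globally: your $U$ is indeed unitary, and the computation $U^{*}\widetilde{M}U=\bar{M}\oplus M$ is correct (I verified the block arithmetic, including the sign convention), after which the doubling of multiplicities is immediate because $\bar{M}=M^{t}$ has the same characteristic polynomial as $M$; in fact this last step does not even need that the eigenvalues are real. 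What your approach buys is that the multiplicity bookkeeping comes for free from invariance of the spectrum under unitary conjugation, avoiding the paper's explicit linear-independence verification; what the paper's approach buys is that it stays entirely within real linear algebra on $\R^{2k}$ (exhibiting concrete real eigenvectors of the real symmetric matrix $\widetilde{M}$) rather than conjugating by a complex unitary, and it makes the correspondence between eigenvectors of $M$ and of $\widetilde{M}$ explicit.
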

\begin{proof} Let $\lambda$ be an eigenvalue of $M$ and $z\in\C^k$ a corresponding eigenvector. Writing $z=a+ib$ with $a,b\in\R^k$ this implies $$Ra-Sb=\lambda a \quad \mbox{and}\quad Rb+Sa=\lambda b.$$ So both $$\left(\begin{array}{c}-a \\b\end{array}\right)\quad \mbox{and}\quad \left(\begin{array}{c}b \\a\end{array}\right)$$ are eigenvectors with eigenvalue $\lambda$ of $\widetilde{M}$.
 Now let $z_1,\ldots,z_m\in\C^k$ be  complex vectors and write each $z_j=a_j+ib_j$ with $a_j,b_j\in\R^k$. One checks that $z_1,\ldots,z_m$ are $\C$-linearly independent  if and only if  the vectors $$\left(\begin{array}{c}-a_1 \\ b_1\end{array}\right),\left(\begin{array}{c}b_1 \\ a_1\end{array}\right),\ldots,\left(\begin{array}{c}-a_m \\ b_m\end{array}\right),\left(\begin{array}{c}b_m \\ a_m\end{array}\right)$$ are $\R$-linearly independent in $\R^{2k}$. This finishes the proof. \end{proof}

\begin{lemma} \label{redu}Let $\M$ be a hermitian linear matrix polynomial of size $k$, and write $\M=\mathcal{R}+i\mathcal{S}$ with real symmetric and skew-symmetric linear matrix polynomials $\mathcal{R}$ and $\mathcal{S}$. Define $$\widetilde{\M}:=\left(\begin{array}{c|c}\mathcal{R} & \mathcal{S} \\\hline -\mathcal{S} & \mathcal{R}\end{array}\right).$$ Then $\widetilde{\M}$ is a symmetric linear matrix polynomial of size $2k$ with $$\det\widetilde{\M}= (\det \M)^2.$$\end{lemma}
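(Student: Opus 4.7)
The plan is in two steps: verify the symmetry assertion directly from the block structure, and then compute the determinant by a short block manipulation over $\C$.

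For symmetry, since $\mathcal{R}^t = \mathcal{R}$ and $\mathcal{S}^t = -\mathcal{S}$, transposing the $2 \times 2$ block expression for $\widetilde{\mathcal{M}}$ recovers $\widetilde{\mathcal{M}}$ itself; linearity in $x_1, \ldots, x_n$ is inherited from $\mathcal{M}$, so $\widetilde{\mathcal{M}}$ is a real symmetric linear matrix polynomial of size $2k$.

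For the determinant identity, I would conjugate $\widetilde{\mathcal{M}}$ by the complex unimodular block matrix with blocks $I, iI, 0, I$ and its inverse (which has $-iI$ in place of $iI$); both have determinant $1$. A short block computation yields
$$\begin{pmatrix} I & iI \\ 0 & I \end{pmatrix} \begin{pmatrix} \mathcal{R} & \mathcal{S} \\ -\mathcal{S} & \mathcal{R} \end{pmatrix} \begin{pmatrix} I & -iI \\ 0 & I \end{pmatrix} = \begin{pmatrix} \mathcal{R} - i\mathcal{S} & 0 \\ -\mathcal{S} & \mathcal{R} + i\mathcal{S} \end{pmatrix},$$
which is block lower triangular. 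Hence $\det \widetilde{\mathcal{M}} = \det(\mathcal{R} - i\mathcal{S}) \cdot \det(\mathcal{R} + i\mathcal{S}) = \overline{\det \mathcal{M}} \cdot \det \mathcal{M}$. Since $\mathcal{M}$ is hermitian, $\det \mathcal{M}$ is a real polynomial, so this product collapses to $(\det \mathcal{M})^2$, as required.

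An alternative route bypasses the block trick and invokes Lemma \ref{herm} pointwise: for every $a \in \R^n$, the matrix $\widetilde{\mathcal{M}}(a)$ has the same eigenvalues as $\mathcal{M}(a)$, each with doubled multiplicity, so $\det \widetilde{\mathcal{M}}(a) = (\det \mathcal{M}(a))^2$; since two polynomials in $x_1, \ldots, x_n$ that agree on all of $\R^n$ coincide identically, the claim follows. There is no real obstacle here; the only subtlety is to spot the right block-triangularizing transform (or to cite the previous lemma).
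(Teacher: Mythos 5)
Your proof is correct, but your main argument is genuinely different from the paper's. The paper argues spectrally: it invokes Lemma \ref{herm} to see that $\widetilde{\M}(a)$ has the eigenvalues of $\M(a)$ with doubled multiplicities, then passes through Proposition \ref{eigen} to conclude that $\widetilde{p}_a$ and $(p_a)^2$ have the same zeros with the same multiplicities for every $a\in\R^n$, whence $\widetilde{p}=p^2$; your ``alternative route'' is essentially this argument, streamlined by taking determinants as products of eigenvalues instead of detouring through the zeros of $p_a$. Your primary route, by contrast, is purely algebraic: conjugating by the unimodular block matrix $\left(\begin{smallmatrix} I & iI\\ 0 & I\end{smallmatrix}\right)$ block-triangularizes $\widetilde{\M}$ and gives the identity $\det\widetilde{\M}=\det(\mathcal{R}-i\mathcal{S})\det(\mathcal{R}+i\mathcal{S})$ at the level of polynomial matrices, with no spectral input at all. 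This buys a more self-contained and more general statement (it holds for the realification of any complex matrix polynomial, hermitian or not), while the paper's route is shorter given that Lemma \ref{herm} and Proposition \ref{eigen} are already in place. Your computation checks out; the only step deserving a word is the final collapse $\overline{\det\M}\cdot\det\M=(\det\M)^2$, which needs the (true, and stated in the paper) fact that $\det\M$ has real coefficients --- alternatively you can avoid even that by noting $\mathcal{R}-i\mathcal{S}=\M^{t}$, since $\M$ is hermitian with real-variable entries, so $\det(\mathcal{R}-i\mathcal{S})=\det\M$ directly. The symmetry and monicity of $\widetilde{\M}$ are immediate from the block structure, as you say.
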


\begin{proof}Write $\widetilde{p}=\det\widetilde{\M}$ and $p=\det\M.$ By Lemma \ref{herm}, the eigenvalues of $\widetilde{\M}(a)$ are the same as the eigenvalues of $\M(a)$, just with double multiplicity, for each $a\in\R^n.$ Proposition \ref{eigen} implies that $\widetilde{p}_a$ has the same zeros as $p_a$, just with double multiplicities, for each $a\in\R^n$. So $\widetilde{p}_a=(p_a)^2$ for all $a$, which implies  $\widetilde{p}=p^2$.
\end{proof}
  We see that a spectrahedron can always be defined by a symmetric linear matrix polynomial. In fact, $\widetilde{\M}$  and $\M$ define the same spectrahedron, since their determinants are the same, up to a square. This fact was also observed in \cite{rago}, Section 1.4.

\noindent
From Lemma \ref{redu} we now immediately deduce the following analog of Theorem \ref{boundreal} for hermitian matrices:

\begin{theorem} \label{bound} Let $\mathcal{M}$ be a hermitian linear matrix polynomial of size $k$, defining a spectrahedron in $\R^n$ that does not contain a full line. Let $d$ denote the degree of $p=\det\mathcal{M}$ and assume  $n> \binom{2d+1}{2}.$ Then $$k\geq \frac{n}{2d}+\frac{d-1}{4}.$$
\end{theorem}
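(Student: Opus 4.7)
The strategy is to reduce the hermitian case to the symmetric case already handled in Theorem \ref{boundreal}, using the doubling construction from Lemma \ref{redu}. Given the hermitian linear matrix polynomial $\M$ of size $k$, I would pass to the associated symmetric linear matrix polynomial $\widetilde{\M}$ of size $2k$. By Lemma \ref{redu}, $\det\widetilde{\M}=p^2$, which is a polynomial of degree $2d$ (necessarily even).

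Before applying Theorem \ref{boundreal}, I must verify its hypotheses for $\widetilde{\M}$. First, the spectrahedron condition: by Lemma \ref{herm}, the eigenvalues of $\widetilde{\M}(a)$ are those of $\M(a)$ with doubled multiplicity, so $\widetilde{\M}(a)\succeq 0$ if and only if $\M(a)\succeq 0$. Hence $S(\widetilde{\M})=S(\M)$, and so $S(\widetilde{\M})$ contains no full line. Second, the hypothesis $n>\binom{2d+1}{2}$ is precisely the requirement $n>\binom{(\deg\widetilde p)+1}{2}$ needed for the even-degree branch of Theorem \ref{boundreal}.

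Now I apply the even-degree estimate of Theorem \ref{boundreal} with the degree parameter $2d$ and the size parameter $2k$. This yields
\begin{equation*}
2k \;\geq\; \frac{2n}{2d}+\frac{2d-2}{4} \;=\; \frac{n}{d}+\frac{d-1}{2}.
\end{equation*}
Dividing through by $2$ gives exactly $k\geq \frac{n}{2d}+\frac{d-1}{4}$, as desired.

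There is no real obstacle here; the argument is essentially a direct reduction. The only point that needs a brief sanity check is the preservation of the ``no full line'' condition under the passage $\M\rightsquigarrow\widetilde{\M}$, which is immediate from the eigenvalue description in Lemma \ref{herm}. Everything else is a routine substitution of parameters into the already proved symmetric bound.
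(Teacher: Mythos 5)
Your proof is correct and is exactly the reduction the paper intends: the paper states Theorem \ref{bound} as an immediate consequence of Lemma \ref{redu} combined with Theorem \ref{boundreal}, and your substitution of size $2k$ and degree $2d$ into the even-degree bound, together with the observation that $S(\widetilde{\M})=S(\M)$, fills in precisely the routine details the paper omits.
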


\begin{example}
Let $d=2$. Applying Theorem \ref{bound} shows that $n>10$ implies $k\geq \frac{n+1}{4}$. In the symmetric case, Theorem \ref{boundreal} gives a much stronger bound. If $n>3$, then $k\geq n$. So the RZ-polynomial $p_n=1-x_1^2-\cdots-x_n^2$ cannot be realized as the determinant of a symmetric linear matrix polynomial of size smaller than $n$, except possibly for $n=3$ (although the proof of Theorem \ref{high} below will show that also for $n=3$ there is no symmetric representation of size $2$).  It can indeed always be realized as the determinant of $$\left(\begin{array}{cccc}1 & x_1 & \cdots & x_n \\x_1 & 1 &  &  \\\vdots &  & \ddots &  \\x_n &  &  & 1\end{array}\right)$$ which is of size $k=n+1$. 
We will consider the case of quadratic polynomials in more detail in Section \ref{quadratic}. From the results we can for example produce the following two hermitian representations of $p_3$: $$\left(\begin{array}{cc}1+x_3 & x_1+ix_2 \\x_1-ix_2 & 1-x_3\end{array}\right) \mbox{ and } \left(\begin{array}{cc}1-x_3 & -x_1-ix_2 \\-x_1+ix_2 & 1+x_3\end{array}\right),$$ which are checked not to be unitarily equivalent. It will turn out that up to unitary equivalence, there are only these two representations of size two, and indeed already $p_4$ does not admit a $2\times 2$ hermitian representation any more (see Theorem \ref{unique}).
\end{example}

\section{Polynomials without determinantal representations}\label{norep}

The first result in this section is, that for suitable choices of $d$ and $n$, there are dimensional differences between the set $\mathcal{R}_{n,d}$ of real zero polynomials of degree $d$ in $n$ variables, and the set $\mathcal{D}_{n,d}$ of such polynomials with a determinantal representation. This is what we mean by saying that  \textit{almost no}Ê real zero polynomial has a determinantal representation. In the following, let $\R[x_1,\ldots,x_n]_d$ denote the finite dimensional vector space of polynomials of degree at most $d$. 

\begin{lemma} The set $\mathcal{R}_{n,d}\subseteq\R[x_1,\ldots,x_n]_d$ is a closed semialgebraic set of dimension $$\binom{d+n}{d}-1.$$
\end{lemma}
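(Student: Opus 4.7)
The ambient space $\R[x_1,\ldots,x_n]_d$ has dimension $\binom{n+d}{d}$, and the defining condition $p(0)=1$ forces $\mathcal R_{n,d}$ into the affine hyperplane $\{p: p(0)=1\}$, giving the upper bound $\dim \mathcal R_{n,d} \leq N := \binom{n+d}{d} - 1$ for free. To establish semialgebraicity and closedness, I note that the property ``a given univariate real polynomial of degree $\leq d$ has only real roots'' is a closed semialgebraic condition on its coefficients (for instance via positive semidefiniteness of the associated Hermite form, or by the classical characterizations of hyperbolicity in terms of subresultants). Since $(p,a) \mapsto p_a$ is polynomial, this gives a closed semialgebraic condition jointly in $p$ and $a$, and $\mathcal R_{n,d}$ is then obtained by intersecting over all $a\in\R^n$ and conjoining with $p(0) = 1$. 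Tarski-Seidenberg yields semialgebraicity, and closedness is automatic as an intersection of closed sets.

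For the matching lower bound, I would exhibit a single $p_0 \in \mathcal R_{n,d}$ with an open neighborhood in $\{p: p(0) = 1\}$ contained in $\mathcal R_{n,d}$. The cleanest route is the homogenization bijection $p(x) \longleftrightarrow \tilde p(x_0,x) := x_0^d\,p(x/x_0)$, which identifies $\{p \in \R[x]_d : p(0) = 1\}$ with $\{\tilde p \in \R[x_0,\ldots,x_n]^{\mathrm{hom}}_d : \tilde p(e_0) = 1\}$ and sends the RZ condition to hyperbolicity in direction $e_0$. G{\aa}rding's classical theorem then says that the cone of polynomials hyperbolic in a fixed direction has nonempty interior in the space of homogeneous polynomials of degree $d$, its interior consisting of strictly hyperbolic polynomials. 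Intersecting with the affine slice $\{\tilde p(e_0) = 1\}$ and transporting back through the bijection gives a nonempty interior of $\mathcal R_{n,d}$ inside $\{p: p(0) = 1\}$, forcing $\dim\mathcal R_{n,d} = N$.

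The main technical obstacle is producing the strictly RZ seed polynomial that the interior argument needs. The naive candidate $1 - \|x\|^2$ is an interior point of $\mathcal R_{n,2}$ but degenerates for $d > 2$, because its degree-$d$ homogenization $x_0^{d-2}(x_0^2 - \|x\|^2)$ carries an $x_0^{d-2}$ factor and is not strictly hyperbolic; accordingly, generic perturbations in $\R[x]_d$ introduce new top-degree terms whose complementary roots can form complex conjugate pairs. For even $d = 2e$ a clean explicit seed is $p_0(x) = \prod_{i=1}^e (1 - c_i \|x\|^2)$ with distinct $c_i > 0$: after the substitution $s = \|a\|\,t$ the slice polynomial reduces to a fixed polynomial in $s$ with $d$ distinct real roots, and continuity of roots on the compact unit sphere yields an $\varepsilon$-neighborhood of $p_0$ contained in $\mathcal R_{n,d}$. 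The odd case is more delicate (one would otherwise need to patch in a linear factor and check noncollision of roots as the direction varies) and is most efficiently handled by invoking G{\aa}rding's theorem directly, as above.
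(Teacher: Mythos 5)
Your proof is correct and follows essentially the same route as the paper: quantifier elimination (Tarski--Seidenberg) for semialgebraicity, continuity of roots for closedness, and full dimension via nonempty interior of $\mathcal{R}_{n,d}$ in the affine slice $\{p(0)=1\}$ coming from the openness of strictly hyperbolic homogeneous polynomials under the homogenization bijection. The only differences are cosmetic: the paper attributes that openness statement to Nuij \cite{nu} rather than G{\aa}rding, and it does not construct an explicit strictly hyperbolic seed, though your even-degree example $\prod_{i}(1-c_i\Vert x\Vert^2)$ is a nice concrete witness.
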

\begin{proof}
Being a real zero polynomial can be expressed in a formula of first order logic, using quantifiers. By quantifier elimination of the theory of real closed fields, the set $\mathcal{R}_{n,d}$ is a semialgebraic subset of $\R[x_1,\ldots,x_n]_d$. 
It is a well known fact that the zeros of a univariate polynomial depend continuously on the coefficients of the polynomial. It is thus easy to check that  $\mathcal{R}_{n,d}$ is closed.
Finally, Nuij \cite{nu} has proven that the set of all hyperbolic polynomials with only simple roots is open within the space of homogeneous polynomials. So $\mathcal{R}_{n,d}$ has nonempty interior in the subspace of $\R[x_1,\ldots,x_n]_d$ defined by the condition $p(0)=1$. This proves $\dim\mathcal{R}_{n,d}=\binom{d+n}{d}-1.$
\end{proof}

\begin{theorem}\label{det}
The set $\mathcal{D}_{n,d}\subseteq\R[x_1,\ldots,x_n]_d$ is a closed semialgebraic set of dimension at most $n^3d^2.$
\end{theorem}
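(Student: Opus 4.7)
The plan is to realize $\mathcal{D}_{n,d}$ as the image of a single polynomial map whose source has real dimension exactly $n^3 d^2$.

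By Theorem \ref{upbound}, every $p\in \mathcal{D}_{n,d}$ admits a hermitian determinantal representation of size at most $nd$. Any such representation can be inflated to size exactly $nd$ by padding the constant term with an identity block and each $M_i$ with a zero block, without altering the determinant. Consequently $\mathcal{D}_{n,d}$ equals the image of the map
\[
\Phi: {\rm H}_{nd}(\C)^n \longrightarrow \R[x_1,\ldots,x_n]_d,\qquad (M_1,\ldots,M_n)\longmapsto \det\bigl(I+x_1M_1+\cdots+x_nM_n\bigr).
\]
The coefficients of this determinant are real polynomial functions of the real and imaginary parts of the entries of the $M_i$, so $\Phi$ is a polynomial map and Tarski--Seidenberg delivers the semialgebraicity of $\mathcal{D}_{n,d}$. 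Since ${\rm H}_{nd}(\C)$ has real dimension $(nd)^2$, the source of $\Phi$ has real dimension $n(nd)^2 = n^3 d^2$, which is an upper bound for the dimension of its semialgebraic image.

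For closedness, I would take a convergent sequence $p_k\to p$ in $\mathcal{D}_{n,d}$, pick $(M_1^{(k)},\ldots,M_n^{(k)})\in {\rm H}_{nd}(\C)^n$ with $\Phi(M_1^{(k)},\ldots,M_n^{(k)})=p_k$, and argue by compactness. The key ingredient is a uniform operator-norm bound on the $M_i^{(k)}$. By Proposition \ref{eigen} the eigenvalues of $M_i^{(k)}$ are the reciprocals (with a sign) of the nonzero roots of the univariate polynomial $(p_k)_{e_i}(t)$. Because $p_k(0)=1$ for all $k$, a Cauchy-type bound applied to the reciprocal polynomial $t^d(p_k)_{e_i}(1/t)$ shows that the nonzero roots of $(p_k)_{e_i}$ are bounded away from $0$ uniformly in $k$. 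This translates into a uniform bound on $\|M_i^{(k)}\|$, so Bolzano--Weierstrass yields a convergent subsequence whose limit maps under $\Phi$ to $p$, proving $p\in\mathcal{D}_{n,d}$.

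The main obstacle I anticipate is the closedness step, specifically making sure the uniform root-separation bound behaves well under degeneracies such as the degree of $(p_k)_{e_i}$ dropping in the limit; once that is pinned down, the argument is routine. Semialgebraicity and the dimension estimate follow almost immediately from Theorem \ref{upbound} combined with Tarski--Seidenberg and the standard fact that a polynomial map cannot increase dimension.
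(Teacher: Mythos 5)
Your proof is correct and takes essentially the same route as the paper: the same determinant map on ${\rm H}_{nd}(\C)^n$ (though you should let it land in $\R[x_1,\ldots,x_n]_{nd}$ and take the intersection of its image with $\R[x_1,\ldots,x_n]_d$, since $\det(I+x_1M_1+\cdots+x_nM_n)$ can have degree up to $nd$), Tarski--Seidenberg plus the source dimension $n^3d^2$, and closedness via Proposition \ref{eigen}, uniform eigenvalue bounds, and compactness. Your Cauchy-bound derivation of the norm bound is a harmless variant of the paper's ``fixed ball inside $S(p_j)$'' argument, and the degree-drop degeneracy you worry about is a non-issue, because the reciprocal polynomial $t^d(p_k)_{e_i}(1/t)$ remains monic of degree $d$ (its extra roots at $0$ merely reflect zero eigenvalues, which need no bound).
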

\begin{proof}
Consider the semialgebraic mapping \begin{align*}\det\colon {\rm H}_{nd}(\C)^n &\rightarrow \R[x_1,\ldots,x_n]_{nd}\\ (M_1,\ldots,M_n)&\mapsto \det( I+x_1M_1+ \cdots +x_nM_n).\end{align*} The set $\mathcal{D}_{n,d}$ is the image of $\det$ intersected with $\R[x_1,\ldots,x_n]_d,$ by Theorem \ref{upbound}. So $\mathcal{D}_{n,d}$  is semialgebraic and of dimension at most $$\dim_\R  {\rm H}_{nd}(\C)^n =n^3d^2.$$ Now let $(p_j)_j$ be a sequence of polynomials from $\mathcal{D}_{n,d}$, converging to some polynomial $p\in\mathcal{R}_{n,d}$. Let $M_1^{(j)},\ldots,M_n^{(j)}$ be matrices of size $nd$ from a determinantal representation of $p_j$.
Since $S(p)$ contains some ball around the origin, and the degree of all $p_j$ is at most $d$, we can assume that each $S(p_j)$ contains some fixed ball around the origin. In view of Proposition \ref{eigen}, this means that the Eigenvalues and thus the norms of all $M_i^{(j)}$ are simultaneously bounded. So we can assume that each $M_i^{(j)}$ converges to some $M_i$. By continuity, this yields a determinantal representation of $p$. \end{proof}

Comparing the dimensions of $\mathcal{R}_{n,d}$ and $\mathcal{D}_{n,d}$ we get the following Corollary:

\begin{corollary}
For either  $d\geq 4$ fixed, and large enough values of $n$, or $n\geq 3$ fixed and large enough values of $d$, almost no polynomial in $\mathcal{R}_{n,d}$ has a determinantal representation. \end{corollary}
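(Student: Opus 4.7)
The plan is to simply compare the two dimension estimates already at our disposal: the preceding Lemma gives $\dim \mathcal{R}_{n,d} = \binom{n+d}{d}-1$, while Theorem \ref{det} gives $\dim \mathcal{D}_{n,d} \leq n^3 d^2$. It suffices to verify that in each of the two regimes the former strictly exceeds the latter. Since $\mathcal{D}_{n,d}$ is closed semialgebraic and $\mathcal{R}_{n,d}$ has full dimension in its ambient affine subspace $\{p \in \R[x_1,\ldots,x_n]_d \mid p(0)=1\}$, a strict dimension inequality forces $\mathcal{R}_{n,d} \setminus \mathcal{D}_{n,d}$ to be a semialgebraic subset of $\mathcal{R}_{n,d}$ whose complement has (Lebesgue) measure zero and is nowhere dense --- which is the intended meaning of \emph{almost no}.

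In the first regime I would fix $d\geq 4$ and view
$$\binom{n+d}{d} = \frac{1}{d!}(n+1)(n+2)\cdots(n+d)$$
as a polynomial in $n$ of degree $d$ with leading coefficient $1/d!$. The competing quantity $n^3 d^2$ is only cubic in $n$, so because $d\geq 4$ one has $\binom{n+d}{d}-1 > n^3 d^2$ for all sufficiently large $n$, with a surplus growing like $n^{d-3}/d!$.

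In the second regime I would fix $n\geq 3$ and now view $\binom{n+d}{n}$ as a polynomial in $d$ of degree $n$ with leading term $d^n/n!$. Since $n^3 d^2$ is only quadratic in $d$ and $n\geq 3$, the same inequality holds for all sufficiently large $d$.

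There is no substantive obstacle here: the real work was done in the preceding Lemma and in Theorem \ref{det}, and the corollary reduces to two asymptotic comparisons of elementary functions. The one point worth flagging is the role of Theorem \ref{upbound} inside Theorem \ref{det}: without the structural bound forcing every determinantal representation to factor through matrices of size at most $nd$, no finite-dimensional parametrizing space for $\mathcal{D}_{n,d}$ would be available and the dimension-count argument would collapse. It is precisely this uniform upper bound on representation size that upgrades the old parameter-count heuristic of Helton and Vinnikov into a rigorous measure-theoretic statement.
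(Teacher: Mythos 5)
Your argument is exactly the paper's: the corollary is stated there as an immediate comparison of the dimension bounds $\dim\mathcal{R}_{n,d}=\binom{d+n}{d}-1$ and $\dim\mathcal{D}_{n,d}\leq n^3d^2$, and your two asymptotic checks (degree $d\geq 4$ in $n$ beats cubic, degree $n\geq 3$ in $d$ beats quadratic) are the intended content, with the semialgebraic-dimension reading of ``almost no'' also matching the paper's. (Only a cosmetic slip: the surplus grows like $n^{d}/d!$, not $n^{d-3}/d!$; it is the ratio that behaves like $n^{d-3}$.)
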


 Note that although there must exist many RZ-polynomials without determinantal representations, the above results are non-constructive. Beside Br\"and\'en's explicit polynomial constructed from the V\'{a}mos  cube,  there is a complete lack of examples. We want to close this gap by providing methods to produce many such explicit examples. 
 
 The first result can be understood as a strengthening of Br\"and\'{e}n's first result. He proves that among the shifted hyperbolic polynomials, there must be  many that do not have a determinantal representation. A little trick indeed even shows that \textit{none} of the considered RZ-polynomials has a representation. 
 
 \begin{theorem}\label{main} Let $p\in\R[x_1,\ldots,x_n]$ be an RZ-polynomial of degree $d,$  defining a rigidly convex set that contains a full dimensional cone, but  not a full line. If  $n> \binom{d+1}{2}$, then $p$ does not have a symmetric determinantal representation. If $n>d^2,$ then $p$ does not have a hermitian determinantal representation.

\end{theorem}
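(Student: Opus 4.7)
The plan is a short synthesis of results already proved in Section~\ref{size}. Suppose for contradiction that $p$ admits a symmetric (respectively hermitian) determinantal representation $\M=I+x_1M_1+\cdots+x_nM_n$. Since $S(\M)=S(p)$ by Remark~\ref{rz}(ii), the hypothesis that $S(p)$ contains a full dimensional cone lets us apply Theorem~\ref{cone}, producing a new representation of $p$ in which each $M_i$ is symmetric (respectively hermitian) of size exactly $d=\deg p$.

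Next, because $S(p)$ does not contain a full line, Lemma~\ref{dimension} applies to this new representation and guarantees that $M_1,\ldots,M_n$ are $\R$-linearly independent. Therefore $V_\M=\mathrm{span}_\R\{M_1,\ldots,M_n\}$ is an $n$-dimensional real subspace of $\mathrm{Sym}_d(\R)$ in the symmetric case, or of $\mathrm{H}_d(\C)$ in the hermitian case.

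Comparing with the real dimensions of the ambient spaces, we find
\[
n=\dim_\R V_\M\leq \dim_\R \mathrm{Sym}_d(\R)=\binom{d+1}{2}
\]
in the symmetric case, and
\[
n=\dim_\R V_\M\leq \dim_\R \mathrm{H}_d(\C)=d^2
\]
in the hermitian case. Either estimate contradicts the corresponding hypothesis $n>\binom{d+1}{2}$ or $n>d^2$, and the theorem follows.

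There is no real obstacle here, because all the work has already been done: Theorem~\ref{cone} compresses the representation down to size $d$ using the cone assumption, and Lemma~\ref{dimension} converts the no-line assumption into linear independence of the $M_i$. The only thing to be a little careful about is that Theorem~\ref{cone} is applied to the hypothetical representation $\M$ itself (whose spectrahedron equals $S(p)$) rather than to $p$ in the abstract, so that we really are entitled to assume size $d$ before invoking the dimension count.
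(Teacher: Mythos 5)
Your argument is correct and is essentially identical to the paper's proof: compress the hypothetical representation to size $d$ via Theorem \ref{cone} using the cone assumption, use Lemma \ref{dimension} with the no-line assumption to get linear independence of $M_1,\ldots,M_n$, and compare with $\dim_\R {\rm{Sym}}_d(\R)=\binom{d+1}{2}$ resp.\ $\dim_\R {\rm{H}}_d(\C)=d^2$. Your closing remark about applying Theorem \ref{cone} to the representation itself (so that $S(\M)=S(p)$) is a correct and welcome clarification, but it does not change the route.
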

 \begin{proof}
 If $p$ had a determinantal representation, then it would have one of size $d$, by Theorem \ref{cone}. On the other hand, the matrices $M_1,\ldots,M_n$ occuring in such a  representation would be linearly independent, by Lemma \ref{dimension}. Comparing with the real dimension of the space of symmetric and hermitian matrices, we get $n\leq \binom{d+1}{2}$ in the symmetric case, and $n\leq d^2 $ in the hermitian case. This contradicts the assumption. \end{proof}

Br\"and\'{e}n has considered RZ-polynomials that arise as a shift of  hyperbolic polynomials. If $p\in\R[x_1,\ldots,x_n]$ is a degree $d$ RZ-polynomial, then its shifted homogenization is defined as $$\widetilde{p}:=(x_0+1)^d\cdot p\left(\frac{x}{x_0+1}\right).$$ This is again a RZ-polynomial of degree $d$, and its rigidly convex set contains a full dimensional cone. So these polynomials serve as a source of many examples.

\begin{example}\label{simple}
Consider $p_n=1-x_1^2 -\cdots -x_n^2$. For $n\geq 3$ we find that $$\widetilde{p}_n=(x_0+1)^2 -x_1^2 -\cdots -x_n^2$$ is not realizable as the determinant of a symmetric linear matrix polynomial. For $n\geq 4$ it is not realizable as a hermitian determinant. Note that for $n=3$  we can realize it as the determinant of the hermitian matrix $$\left(\begin{array}{cc}1+x_0+x_1 & x_2+ix_3 \\x_2-ix_3 & 1+x_0-x_1\end{array}\right) .$$ Splitting this matrix into a symmetric and a skew-symmetric part, and building the symmetric block matrix of size $4$ as explained in Lemma \ref{redu}, we get a symmetric determinantal representation of $\widetilde{p}_3^2$. We will show below that for any quadratic RZ-polynomial, a  high enough  power has a determinantal representation.\end{example}

\begin{example}\label{hyper}
We can apply Theorem \ref{main} also to polynomials that do not arise as a shifted homogenization. Consider for example the RZ-polynomial $$q_n=(x_1+\sqrt2)^2 -x_2^2 -\cdots -x_n^2-1,$$ whose zero set is a two-sheeted hyperboloid. For $n\geq 5$, it does not have a hermitian determinantal representation, for $n=4$ no symmetric determinantal representation.
\end{example}

The following result applies to cases where the degree is high, compared to the number of variables.

\begin{theorem}\label{high} Let $p\in\R[x_1,\ldots,x_n]$ be a real zero polynomial of degree $d,$ such that $S(p)$ does not contain a full line. Further suppose  $d\not\equiv 0,1,7\mod 8,$ and for each $a\in\R^n,$ the polynomial  $p_a$ has only simple zeros (including the zeros at infinity). If $n\geq 3$, then the shifted homogenization $\widetilde{p}$ does not have a symmetric determinantal representation. For $n\geq 4,$ it does not have a hermitian representation.
\end{theorem}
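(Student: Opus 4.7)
The strategy is to reduce to a size-$d$ representation of $p$ via the shifted homogenization and Theorem~\ref{cone}, then use the simplicity hypothesis to extract an $n$-dimensional subspace of symmetric/hermitian $d\times d$ matrices in which every nonzero element has simple spectrum, and finally appeal to a topological obstruction governed by $d \bmod 8$.

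Assume $\widetilde p$ admits a determinantal representation. Since $\widetilde p(x_0,0,\ldots,0)=(x_0+1)^d$, the positive $x_0$-ray is contained in $S(\widetilde p)$, so the rigidly convex set of $\widetilde p$ contains a full-dimensional cone. Theorem~\ref{cone} then yields a representation $\widetilde p=\det(I+x_0 M_0+x_1 M_1+\cdots+x_n M_n)$ of size exactly $d$. Restricting to $x_1=\cdots=x_n=0$ gives $\det(I+x_0 M_0)=(x_0+1)^d$, so all eigenvalues of $M_0$ equal $1$, and $M_0=I$ by diagonalizability. Comparing with $\widetilde p=(1+x_0)^d\,p(x/(1+x_0))$ and setting $x_0=0$ produces a size-$d$ representation
$$p=\det(I+x_1 M_1+\cdots+x_n M_n).$$

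Next I translate the simplicity assumption into spectral information about $V:=\mathrm{span}_\R\{M_1,\ldots,M_n\}$. By Proposition~\ref{eigen}, for every $a\in\R^n$ the nonzero eigenvalues of $a_1M_1+\cdots+a_nM_n$ are in bijection with the zeros of $p_a$ (with multiplicities), and the multiplicity of $0$ as an eigenvalue equals the codegree $d-\deg p_a$, which is precisely the multiplicity of the root of $p_a$ at infinity. Since $p_a$ has only simple zeros including at infinity, every nonzero $a$ yields a matrix with $d$ pairwise distinct real eigenvalues. A nontrivial linear relation among $M_1,\ldots,M_n$ would produce a nonzero matrix with $0$ as a $d$-fold eigenvalue, contradicting simplicity for $d\geq 2$; hence $\dim V=n$ and every nonzero element of $V$ has simple spectrum.

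The decisive final step bounds the dimension of such a subspace. On the sphere $S(V)\cong S^{n-1}$ one has a continuous ordered decomposition $\R^d=L_1(M)\oplus\cdots\oplus L_d(M)$ into eigenlines whose total bundle is trivial, and the antipodal map $M\mapsto -M$ reverses the ordering, pairing $L_i$ with $L_{d+1-i}$. This bundle data carries a Clifford-algebra action whose representation theory is controlled by the residue of $d$ modulo~$8$, and a standard topological obstruction argument (in the spirit of Adams's solution to the vector-fields-on-spheres problem and its extension to subspaces of real symmetric or complex hermitian matrices with simple spectrum by Friedland and by Falikman--Friedland--Loewy) forces $n\leq 2$ in the symmetric case and $n\leq 3$ in the hermitian case precisely when $d\not\equiv 0,1,7\pmod 8$. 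This contradicts the assumption and completes the proof. The main obstacle lies entirely in invoking the correct topological bound; the reduction above to a subspace-with-simple-spectrum question is routine.
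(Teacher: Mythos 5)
Your proposal follows essentially the same route as the paper: reduce via Theorem \ref{cone} and dehomogenization to a size-$d$ monic representation of $p$, observe via Proposition \ref{eigen} that ${\rm span}_\R\{M_1,\ldots,M_n\}$ is an $n$-dimensional space of symmetric/hermitian matrices whose nonzero elements all have simple spectrum, and invoke the known Clifford/Adams-type obstruction (the paper cites Friedland--Robbin--Sylvester, Theorem B in the symmetric and Theorem D in the hermitian case) giving $n\leq 2$ resp.\ $n\leq 3$ when $d\not\equiv 0,1,7 \bmod 8$. The only glitch is the inference that a single ray in $S(\widetilde p)$ yields a full-dimensional cone; one should instead note that the rays through all directions $(1,u)$ with $u$ small lie in $S(\widetilde p)$ (as the paper states for shifted homogenizations), which is immediate since $p>0$ on a neighborhood of the origin.
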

\begin{proof}
If $\widetilde{p}$ has a determinantal representation, then by applying Theorem \ref{cone} and dehomogenizing we see that $p$ has a representation of size $d$. Thus the real space $V$ spanned by the $n$ matrices occuring in such a representation contains only matrices with simple eigenvalues, by Proposition \ref{eigen}. The dimension of $V$ is $n$, by Lemma \ref{dimension}. This contradicts the main result of Friedland, Robbin and Sylvester \cite{frs}, Theorem B in the symmetric case, and Theorem D in the hermitian case.
\end{proof}

\begin{remark}
The work of Friedland, Robbin and Sylvester also contains results in the case that $d\equiv 0,1$ or $7\mod 8$, that are more technical. Although they can be used to obtain results in the spirit of Theorem \ref{high},  we decided not to include them, to keep the exposition more concise.
\end{remark}

\begin{example}
Consider again $p_n=1-x_1^2-\cdots-x_n^2.$  Theorem \ref{high} is another way to see that for $n\geq 3$, $\widetilde{p}_n$ does not have a symmetric representation, and  no hermitian one for $n\geq 4$. But we can now  rise the degree by for example considering $$p_{n,m}:=p_n(1+p_n)(2+p_n)\cdots(m+p_n).$$ If $n\geq 3$ and $m$ is not a multiple of $4$, then the shifted homogenization $\widetilde{p}_{n,m}$ does not have a symmetric determinantal representation. For $n\geq 4$ the same is true with hermitian representations. This contrasts the fact that taking high enough \textit{powers} of $p_n$ results in a polynomial whose shifted homogenization has a representation, as we will show in Section \ref{quadratic}.
\end{example}

So far, all counterexamples included the condition that $S(p)$ contains a full dimensional cone. We can also construct counterexamples with $S(p)$ compact, using Theorem \ref{det} again.
So let $\widetilde{p}$ be the shifted homogenization of a real zero polynomial  $p\in\mathcal{R}_{n,d}.$ 
Then $\widetilde{p}\in\mathcal{R}_{n+1,d}$ is again a real zero polynomial, and there are explicit such examples without a determinantal representation, as we have just shown. 

We now multiply $\widetilde{p}$ with a real zero polynomial defining a ball of radius $r>1$  around the point $(-1,0,\ldots,0)$: $$ q_r = \widetilde{p}\cdot \frac{r}{r-1}\left(1- \frac1r\left((x_0+1)^2 + x_1^2 +\cdots + x_n^2\right)\right).$$ Then $S(q_r)$ is clearly compact. Now if $q_r$ has a determinantal representation for \textit{some} $r>1$, it has a representation for \textit{all} $r>1$. This follows easily from the fact that $q_r$ and $q_s$ can be transformed to each other by shifting and scaling. 

Now for $r\to\infty$, the polynomials $q_r$ converge to $\widetilde{p},$ and in view of the closedness result from  Theorem \ref{det}, none of the $q_r$ can thus have a determinantal representation. Note that if no power of $\widetilde{p}$ has a determinantal representation, then no power of no $q_r$ can have  a determinantal representation, by the same argument.

\begin{example} Take $\widetilde{p}= (x_0+1)^2 -x_1^2 -x_2^2-x_3^2 -x_4^2.$ We find that $$2\left((x_0+1)^2-x_1^2-x_2^2-x_3^2-x_4^2\right)\left(1-\frac12\left((x_0+1)^2+x_1^2+x_2^2+x_3^2+x_4^2\right)\right)$$ does not have a determinantal representation.
\end{example}

\begin{example}
Let $\widetilde{p}\in\mathcal{R}_{8,4}$ be Br\"and\'{e}n's example, constructed from the V\'{a}mos cube. As above, by multiplying with a suitably shifted ball, we get a polynomial $q$ with $S(q)$ compact, and no power of $q$ has a determinantal representation.
\end{example}

\begin{remark}
Note that if we multiply \textit{any} $p\in\mathcal{R}_{n,d}\setminus\mathcal{D}_{n,d}$ with $1-\frac{1}{r}\left(x_1^2+x_1^2+\cdots +x_n^2\right)$, then for some large enough value of $r$, the result will be a polynomial without a determinantal representation, defining a compact set.
\end{remark}

\section{The generalized Clifford algebra associated with a real zero polynomial}\label{algebra}

We now consider the problem of representing some power of a real zero polynomial as a determinant. As explained in the introduction, we characterize this problem in terms of finite dimensional representations of an algebra with involution. We became aware that a similar approach has been used for the problem of linearizing forms, by Heerema \cite{he}, Roby \cite{ro} and Childs \cite{ch}, among others. A solution to their problem implies a determinantal representation for the polynomial, but not necessarily a hermitian one, and also  without the matrix $M_0$ being positive semidefinite.    Further, Pappacena \cite{pa} has used an algebra as below to realize polynomials as minimal polynomials of matrix pencils. From this one can also deduce  determinantal representations, this time even monic, i.e. with $M_0=I$, but still not necessarily with all other matrices being hermitian. We will see in Section \ref{quadratic} that the strive for hermitian representations needs some more work in general.

So let $p\in\R[x_1,\ldots,x_n]$ be a real zero polynomial of degree $d\geq 1$, and $$\hat{p}(x_0,\ldots,x_n):=x_0^d\cdot p\left( \frac{x}{x_0}\right)$$ its usual homogenization. In the free non-commutative algebra  $\C\langle z_1,\ldots,z_n\rangle$ consider the polynomial $$h_a:= \hat{p}(-a_1z_1-\cdots-a_nz_n, a_1,\ldots,a_n),$$ for $a\in\R^n.$ Let $J(p)$ be the two-sided ideal in $\C\langle z_1,\ldots,z_n\rangle$ generated by all the polynomials $h_a$,  with $a\in\R^n$. We equip $\C\langle z_1,\ldots,z_n\rangle$ with the involution defined by  $z_j^*=z_j,$ for all $j$. Then $J(p)$ is a $*$-ideal and we can define the involution on the quotient.

\begin{definition} We call the
 $*$-algebra $$\A(p):=\C\langle z_1,\ldots,z_n\rangle/ J(p) $$ the \textit{generalized Clifford algebra associated with $p$.}
\end{definition}

\begin{remark} Note that the ideal $J(p)$ is finitely generated, although we used infinitely many generators to define it. Write $$h_a=\sum_{\alpha\in\N^n, \vert \alpha\vert = d} a^{\alpha} q_{\alpha}$$ for suitable $q_{\alpha}\in\C\langle z_1,\ldots, z_n\rangle.$ It is then easy so check that the $q_{\alpha}$ generate the ideal $J(p)$.

\end{remark}

Under a (finite dimensional) unital $*$-representation of $\A(p)$ we will in the following understand an algebra homomorphism $\A(p)\rightarrow {\rm{M}}_k(\C)$ for some $k\in\N,$ preserving the unit and the involution. We call $k$ the \textit{dimension} of the representation.
The following is our main result in this section.

\begin{theorem}\label{rep} Let $p\in\R[x_1,\ldots,x_n]$ be a real zero polynomial of degree $d\geq 1$.  
\begin{itemize}\item[(i)] If  some power $p^r$ has a determinantal representation of size $rd$, then $\A(p)$ admits a  unital $*$-representation of dimension $rd$. \item[(ii)] If $p$ is irreducible and  $\A(p)$ admits a unital $*$-representation of dimension $k$, then $k=rd$ and $p^r$ has a  determinantal representation of size $rd$.   
\end{itemize}
\end{theorem}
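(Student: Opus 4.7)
The plan is to view both implications through the matrix identity $\hat{p}(-A, a) = 0$, where $A := \sum_i a_i N_i$ and the $N_i \in {\rm{H}}_k(\C)$ are hermitian. This identity is exactly the vanishing of the generators $h_a$ of $J(p)$ under the assignment $z_i \mapsto N_i$, and (via Cayley--Hamilton together with the hermitian hypothesis) it is also what one reads off from a determinantal representation of a power of $p$. So the correspondence $z_i \leftrightarrow N_i$ bridges $*$-representations of $\A(p)$ and hermitian determinantal representations of powers of $p$, and the work in each direction is to extract the correct divisibility relation.

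For part (i), given hermitian matrices $N_1, \ldots, N_n$ of size $rd$ with $p^r = \det(I + \sum_i x_i N_i)$, I define $\pi \colon \C\langle z_1, \ldots, z_n\rangle \to M_{rd}(\C)$ by $z_i \mapsto N_i$. This is a unital $*$-homomorphism because each $N_i$ is hermitian, and it descends to $\A(p)$ provided $\pi(h_a) = \hat{p}(-A, a) = 0$ for every $a \in \R^n$. The characteristic polynomial of the hermitian matrix $-A$ in $t$ equals $\det(tI + A) = \widehat{p^r}(t, a) = \hat{p}(t, a)^r$; because $-A$ is diagonalizable, its minimal polynomial equals the radical of its characteristic polynomial and hence divides $\hat{p}(t, a)$. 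Substituting $t \mapsto -A$ yields $\hat{p}(-A, a) = 0$, as required.

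For part (ii), let $\pi \colon \A(p) \to M_k(\C)$ be a unital $*$-representation and set $N_i := \pi(z_i)$. These matrices are hermitian and satisfy $\hat{p}(-A, a) = 0$ for every $a \in \R^n$, hence for every $a \in \C^n$ by polynomial identity. Put $\hat{q}(x_0, x) := \det(x_0 I + \sum_i x_i N_i)$, homogeneous of degree $k$ and real because each $N_i$ is hermitian. The claim is that the zero locus of $\hat{q}$ in $\C^{n+1}$ is contained in that of $\hat{p}$: if $\hat{q}(x_0, a) = 0$ then $-x_0$ is an eigenvalue of $A$ with some nonzero eigenvector $v$, so $0 = \hat{p}(-A, a) v = \hat{p}(x_0, a) v$ forces $\hat{p}(x_0, a) = 0$. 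By Hilbert's Nullstellensatz, $\hat{q}$ divides some power $\hat{p}^m$ in $\C[x_0, \ldots, x_n]$, and since both sides are real, also in $\R[x_0, \ldots, x_n]$. The hypothesis $d \geq 1$ prevents $x_0$ from dividing $\hat{p}$, so irreducibility of $p$ lifts to irreducibility of $\hat{p}$ in $\R[x_0, \ldots, x_n]$, and unique factorization then gives $\hat{q} = c \hat{p}^r$. Evaluating at $(1, 0)$ forces $c = 1$, and comparing degrees forces $k = rd$; dehomogenizing yields $p^r = \det(I + \sum_i x_i N_i)$.

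The main obstacles are twofold. In part (i), the proof needs a relation involving $\hat{p}$ rather than $\hat{p}^r$, which is precisely where diagonalizability of hermitian matrices enters and why the argument does not generalize to arbitrary (non-hermitian) determinantal representations. In part (ii), the irreducibility assumption on $p$ is essential for the Nullstellensatz step: without it the divisibility $\hat{q} \mid \hat{p}^m$ could pick up distinct irreducible factors of $\hat{p}$ with mismatched multiplicities, and $\hat{q}$ need not be a pure power of $\hat{p}$. A minor technicality is the passage from real $a$ to complex $a$ when using the vanishing of $\hat{p}(-A, a)$, which is justified because the identity is polynomial in $a$.
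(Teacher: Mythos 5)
Your proof is correct, and while part (i) follows essentially the paper's own route (hermitian hence diagonalizable, so the minimal polynomial of $-\sum_i a_iN_i$ divides $\hat{p}(t,a)$ because the characteristic polynomial is $\hat{p}(t,a)^r$; the paper phrases the same point through Proposition \ref{eigen}), part (ii) is genuinely different at the key divisibility step. The paper stays entirely over $\R$: it uses Proposition \ref{eigen} to see that the real zeros of $q=\det\M$ lie among those of $p$, invokes the fact that an irreducible real zero polynomial generates a real ideal (citing \cite{bcr}, Theorem 4.5.1(v)), and applies the \emph{real} Nullstellensatz to conclude that every irreducible factor of $q$ divides $p$. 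You instead extend the relation $\hat{p}(-\sum_i a_iN_i,a)=0$ from $a\in\R^n$ to $a\in\C^n$ by Zariski density, compare the \emph{complex} zero loci of the homogeneous polynomials $\hat{q}=\det(x_0I+\sum_ix_iN_i)$ and $\hat{p}$, and use the classical Hilbert Nullstellensatz to get $\hat{q}\mid\hat{p}^m$, then descend to $\R[x_0,\ldots,x_n]$ and finish by irreducibility of the homogenization and unique factorization. Your route buys independence from real algebraic geometry (no real ideals, no real Nullstellensatz) at the cost of homogenizing and complexifying; the paper's route keeps everything in the real, inhomogeneous framework it has already built (Proposition \ref{eigen} also hands it $\deg q=k$ directly), but needs the real-ideal input. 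One small slip, harmless to the argument: $x_0\nmid\hat{p}$ not because $d\geq1$, but because $\hat{p}(0,x)$ equals the top-degree form $p_d\neq0$ of $p$ (the hypothesis $d\geq1$ is what makes $p$ a nonunit, so that $r\geq1$ and the irreducibility discussion is meaningful); with that reading, the lift of irreducibility from $p$ to $\hat{p}$ over $\R$ is the standard fact you implicitly use, and everything else checks out.
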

\begin{proof}
For (i) assume $p^r=\det\left(I +x_1M_1+\cdots+ x_nM_n\right)$ for some hermitian matrices $M_j$ of size $rd$. Consider the unital $*$-algebra homomorphism $$\varphi\colon\C\langle z_1,\ldots,z_n\rangle\rightarrow {\rm{M}}_{rd}(\C), z_j\mapsto M_j.$$ For any $a\in\R^n$ we know by Proposition \ref{eigen} that the eigenvalues of $a_1M_1+\cdots+a_nM_n$ arise from the zeros of $p^r_a$ by the rule $\mu\mapsto -\frac{1}{\mu}$ (including possible zeros at infinity). These eigenvalues are precisely the zeros of the univariate polynomial $\hat{p}(-t,a_1,\ldots,a_n),$ so the minimal polynomial of $a_1M_1+\cdots +a_nM_n$ divides $\hat{p}(-t,a_1,\ldots,a_n)$. This means $\varphi(h_a)=0$, so $\varphi$ induces a representation of $\A(p)$ as desired.

For (ii) let $\varphi\colon\A(p)\rightarrow {\rm{M}}_{k}(\C)$ be a unital $*$-algebra homomorphism. Set $M_j:=\varphi\left( z_j +J(p)\right),$  consider the linear matrix polynomial $$\M=I+x_1M_1+\cdots +x_nM_n$$ and its determinant $q=\det\M$. From the defining relations of $\A(p)$ we know $$\hat{p}(-a_1M_1-\cdots-a_nM_n,a_1,\ldots,a_n)=0$$ for all $a\in\R^n$. So the eigenvalues of $a_1M_1+\cdots+a_nM_n$ are among $-\frac{1}{\mu},$ where $\mu$ runs through the zeros of $p_a$ (including possibly $\mu=\infty)$. Proposition \ref{eigen} implies that the zeros of $q$ are contained in the zeros of $p$, and also $\deg(q)=k$. Since every irreducible real zero polynomial defines a real ideal (which follows for example from \cite{bcr} Theorem 4.5.1(v)), the real Nullstellensatz implies that each irreducible factor of $q$ divides $p$. So $q$ divides some power of $p$, and since $p$ is itself irreducible, $q=p^r$ for some $r\geq 1.$ This now finally implies $k=rd$.
 \end{proof}

\begin{remark} One could of course also define the generalized Clifford algebra as a quotient of the free algebra over the real numbers, instead of the complex numbers as we did here. This would allow to characterize symmetric representations of  powers of $p$. But in view of Lemma \ref{redu}, that would only make sense when one is interested in determining  the \textit{lowest} possible power for which there exists a symmetric representation. Since the classification of algebras is often simpler over the complex numbers, we decided not to take this approach.
\end{remark}

\begin{example}\label{sphere}
Consider  $p_n=1-x_1^2-\cdots -x_n^2.$ We find  $\A(p_n)$ defined via the relations $$(a_1z_1+\cdots +a_nz_n)^2=\Vert a\Vert^2,$$ which is the classical Clifford Algebra ${\rm{Cl}}_n(\C)$. It is well known that ${\rm{Cl}}_n(\C) \cong {\rm{M}}_k(\C)$ for even $n$ and $k=2^{\frac{n}{2}},$ and ${\rm{Cl}}_n(\C)\cong {\rm{M}}_k(\C)\oplus {\rm{M}}_k(\C)$ for $n$ odd and $k= 2^{\frac{n-1}{2}}$.
  So ${\rm{Cl}}_n(\C)$ admits a $*$-algebra homomorphism to ${\rm{M}}_k(\C)$ with $k=2^{\lfloor\frac{n}{2}\rfloor},$ for any $n$. Thus the $2^{\lfloor\frac{n}{2}\rfloor-1}$-th power of $p_n$ has a determinantal representation of size $2^{\lfloor\frac{n}{2}\rfloor}$. In the case of $n=2m$ we can use the Brauer-Weyl matrices \cite{brwe} generating the Clifford Algebra. Let $$1:=\left(\begin{array}{cc}1 & 0 \\0 & 1\end{array}\right),1':=\left(\begin{array}{cc}1 & 0 \\0 & -1\end{array}\right),P:=\left(\begin{array}{cc}0 & 1 \\1 & 0\end{array}\right),Q:=\left(\begin{array}{cc}0 & i \\-i & 0\end{array}\right). $$

\noindent
Then consider the hermitian matrices $$ 1' \otimes \cdots \otimes 1'\otimes P\otimes 1\cdots \otimes 1$$ and $$ 1' \otimes \cdots \otimes 1'\otimes Q\otimes 1\cdots \otimes 1,$$ 
 where $\otimes$ denotes the Kronecker (tensor) product of matrices, the product is of length $m$, and both $P$ and $Q$ run through all $m$ possible positions in this product. The arising $2m=n$ matrices are hermitian and yield a determinantal representation of the $2^{m-1}$-th power of $p_n.$
In the case of $n$ odd one can use the additional matrix $1'\otimes \cdots \otimes 1'$ to construct a representation of ${\rm{Cl}}_n(\C). $ This yields for example $$\det \left(\begin{array}{cccc}1+x_5 & x_1+ix_3 & x_2+ix_4 & 0 \\x_1-ix_3 & 1-x_5 & 0 & -x_2-ix_4 \\x_2-ix_4 & 0 & 1-x_5& x_1+ix_3 \\0 & -x_2+ix_4 & x_1-ix_3 & 1+x_5\end{array}\right)=\left( 1-x_1^2-x_2^2-x_3^2-x_4^2 -x_5^2 \right)^2$$
Note that in the odd case there is another representation of ${\rm{Cl}}_n(\C)$, given by the respective negative matrices, which is not equivalent to the first one (in contrast to the even case, where these representations are unitarily equivalent).
\end{example}

\section{Quadratic polynomials}\label{quadratic}

In this section we construct a finite dimensional $*$-representation of $\A(p)$, if $p$ is quadratic. Note that already Pappacena \cite{pa} has proven $\A(p)$ to be isomorphic to the Clifford algebra in the quadratic case. We need to be more subtle, since we are looking for homomorphisms respecting the involution. We start with a lemma that was also noted by Pappacena, and include the proof for completeness.

\begin{lemma}\label{findim}
If $p\in\R[x_1,\ldots,x_n]$ is a quadratic real zero polynomial, then $\dim_\C \A(p)\leq 2^n.$
\end{lemma}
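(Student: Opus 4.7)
The strategy is to mimic the classical PBW/Clifford-algebra argument: exploit the fact that $\hat p$ has degree two to produce, inside $J(p)$, all the quadratic relations needed to reduce an arbitrary word in $z_1,\ldots,z_n$ to a square-free sorted monomial. Since there are exactly $2^n$ subsets of $\{1,\ldots,n\}$, these monomials span $\A(p)$ and the bound follows.

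Write $p(x)=1+L(x)+Q(x)$ where $L(x)=\sum_i b_i x_i$ is the linear part and $Q(x)=\sum_{i,j}c_{ij}x_ix_j$ is the quadratic part ($c_{ij}=c_{ji}\in\R$). Homogenizing gives $\hat p(x_0,x)=x_0^2+x_0L(x)+Q(x)$, so for each $a\in\R^n$
\[
 h_a \;=\; \Bigl(\sum_i a_iz_i\Bigr)^2 \;-\; L(a)\sum_i a_iz_i \;+\; Q(a).
\]
I would then extract two families of relations in $\A(p)$ from the $h_a$'s. Taking $a=e_k$ yields
\[
 z_k^2 \equiv b_k z_k - c_{kk} \pmod{J(p)},
\]
and taking $a=e_k+e_l$ with $k\neq l$ and subtracting $h_{e_k}+h_{e_l}$ gives
\[
 z_kz_l + z_lz_k \equiv b_k z_l + b_l z_k - 2c_{kl} \pmod{J(p)}.
\]
So in $\A(p)$ every square $z_k^2$ is congruent to something of degree at most one, and every anticommutator $z_kz_l+z_lz_k$ is congruent to something of degree at most one as well; equivalently $z_lz_k \equiv -z_kz_l + b_kz_l + b_lz_k - 2c_{kl}$.

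Next I would prove by induction on degree (and, within a fixed degree, on the lexicographic order of the index tuple) that every monomial $z_{i_1}\cdots z_{i_m}$ is a $\C$-linear combination of the ``square-free sorted'' monomials
\[
 z_S := z_{j_1}z_{j_2}\cdots z_{j_r}, \qquad S=\{j_1<j_2<\cdots<j_r\}\subseteq\{1,\ldots,n\},
\]
with $z_\emptyset:=1$. Indeed, if two equal indices occur consecutively, the first relation strictly decreases the degree; if two adjacent indices are out of order, the second relation swaps them while introducing only strictly smaller monomials in the lex order (and strictly lower degree). Since the combined order (degree, then lex) is well-founded, the reduction terminates. Consequently the $2^n$ monomials $\{z_S : S\subseteq\{1,\ldots,n\}\}$ span $\A(p)$ as a $\C$-vector space, so $\dim_\C\A(p)\leq 2^n$.

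The only real subtlety is book-keeping: making sure the reduction procedure is well-defined, i.e.\ that each elementary rewrite strictly decreases the chosen well-founded measure. Since the squaring rule drops the degree by one and the swap rule is degree-preserving on the leading term but produces only lower-degree correction terms, a standard induction on $(\text{degree},\text{lex})$ settles this. No $\ast$-structure or positivity is needed for the bound; we are only counting dimensions, and the argument is purely combinatorial once the two families of quadratic relations above are in hand.
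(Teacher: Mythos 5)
Your proposal is correct and follows essentially the same route as the paper: both extract from the generators $h_a$ the square relations $z_k^2\equiv b_kz_k-c_{kk}$ and, by polarization (the paper via $v^2=rv+s$ for all $v$ in the span of the $z_i$, you via $h_{e_k+e_l}-h_{e_k}-h_{e_l}$), the anticommutator relations, and then conclude that the $2^n$ square-free increasing monomials span $\A(p)$. Your write-up merely makes the rewriting/termination argument explicit where the paper leaves it implicit.
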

\begin{proof}  Let $V$ be the real subspace in $\A(p)$ spanned by the elements $z_i+J(p)$.
Each element $v\in V$ fulfills a real quadratic relation $v^2= rv+s$. For $v,w\in\A(p)$ write $v\equiv w$ if $v-w\in V.$ Clearly $v^2\equiv 0$ for all $v\in V$.
We compute $$0\equiv ((z_i+J(p)) +(z_j+J(p)))^2\equiv (z_iz_j+J(p)) + (z_jz_i+J(p)),$$ so $z_iz_j +J(p)\equiv -z_jz_i +J(p)$  holds in $\A(p)$. This proves that the elements $$z_{i_1}\cdots z_{i_r}+J(p)$$ with $i_1<\cdots<i_r$ generate $\A(p)$ as a vector space, which finishes the proof.
\end{proof}

 Write a quadratic real zero polynomial $p$ as $$p(x)=x^t A x +b^t x +1$$ with $A\in {\rm{Sym}}_n(\R)$ and $b\in\R^n$. Then $p_a(t)= a^tAa\cdot t^2 + b^ta\cdot t +1,$ and the condition that  $p_a$ has only real roots is $\frac14 a^tbb^ta -a^tAa\geq 0$.
 So $p$ being a real zero polynomial is equivalent to $$\frac14 bb^t-A\succeq 0,$$ and this matrix then has a positive symmetric square root.

When we use the Clifford Algebra ${\rm{Cl}}_n(\C)$ in the following, we denote its standard generators by $\sigma_1,\ldots,\sigma_n$. They fulfill the relations $$\sigma_j^2=1, \sigma_j^*=\sigma_j \mbox{ and } \sigma_j\sigma_i=-\sigma_i\sigma_j \mbox{ for } i\neq j.$$

\begin{proposition}\label{tocl} Let $p=x^tAx +b^tx +1\in\R[x_1,\ldots,x_n]$ be a quadratic real zero polynomial. Then there is a unital $*$-algebra homomorphism $$\A(p)\rightarrow {\rm{Cl}}_n(\C),$$ defined by the rule  $$ a_1z_1+\cdots+a_nz_n +J(p) \mapsto \sigma^t \left(\frac14 bb^t-A\right)^{\frac12}a  +\frac12 b^ta$$ for all $a\in\R^n$. If $\frac14 bb^t-A$ is invertible, this is an isomorphism.
\end{proposition}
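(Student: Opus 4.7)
My plan is to unwind what the relations in $\A(p)$ look like for quadratic $p$, build the map as prescribed by the universal property of the free algebra, verify the relations term by term, and finish with a dimension count.

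First I would make the defining relations of $\A(p)$ explicit. Since $\hat p(x_0,\ldots,x_n)=x^tAx+(b^tx)x_0+x_0^2$, the generators of $J(p)$ read
$$h_a \;=\; Z_a^2 - (b^ta)\,Z_a + a^tAa,\qquad Z_a:=\sum_i a_iz_i,$$
so in $\A(p)$ one has $Z_a^2=(b^ta)Z_a-a^tAa$ for every $a\in\R^n$. Writing $B:=\tfrac14 bb^t-A\succeq 0$ and choosing the positive symmetric square root $B^{1/2}$, I would define a map $\tilde\varphi\colon\C\langle z_1,\ldots,z_n\rangle\to{\rm Cl}_n(\C)$ by
$$\tilde\varphi(z_j) \;=\; (B^{1/2}\sigma)_j + \tfrac12 b_j,$$
where $\sigma=(\sigma_1,\ldots,\sigma_n)^t$; this exists by the universal property of the free algebra. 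Then $\tilde\varphi(Z_a)=\sigma^tB^{1/2}a+\tfrac12 b^ta$, exactly the prescribed rule.

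Next I would check that $\tilde\varphi$ annihilates every generator $h_a$ of $J(p)$. Setting $w:=\sigma^tB^{1/2}a$ and using that $\sigma_i\sigma_j+\sigma_j\sigma_i=2\delta_{ij}$, the cross terms in $w^2$ cancel and one gets $w^2=\|B^{1/2}a\|^2=a^tBa$. Hence
$$\tilde\varphi(Z_a)^2 = w^2+(b^ta)w+\tfrac14(b^ta)^2 = a^tBa+(b^ta)w+\tfrac14(b^ta)^2,$$
while $(b^ta)\tilde\varphi(Z_a)-a^tAa=(b^ta)w+\tfrac12(b^ta)^2-a^tAa$, and these agree because $a^tBa=\tfrac14(b^ta)^2-a^tAa$ by definition of $B$. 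Thus $\tilde\varphi(h_a)=0$ for all $a$, and $\tilde\varphi$ descends to a unital algebra homomorphism $\varphi\colon\A(p)\to{\rm Cl}_n(\C)$. Since $b\in\R^n$, $B^{1/2}$ is real symmetric and $\sigma_j^*=\sigma_j$, we have $\varphi(z_j+J(p))^*=\varphi(z_j+J(p))$, so $\varphi$ is a $*$-homomorphism.

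For the second claim, suppose $B$ is invertible; then so is $B^{1/2}$. Since $\varphi(z_j+J(p))-\tfrac12 b_j=(B^{1/2}\sigma)_j$, applying the real matrix $B^{-1/2}$ recovers every $\sigma_k$ as an $\R$-linear combination of elements in the image of $\varphi$. Because $\sigma_1,\ldots,\sigma_n$ generate ${\rm Cl}_n(\C)$ as an algebra, $\varphi$ is surjective. On the other hand Lemma \ref{findim} gives $\dim_\C\A(p)\leq 2^n=\dim_\C{\rm Cl}_n(\C)$, so surjectivity forces equality of dimensions and $\varphi$ is an isomorphism.

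I do not foresee a real obstacle here: the verification is essentially the classical observation that symmetric linear combinations of Clifford generators square to a quadratic form, together with the sharp dimension bound from Lemma \ref{findim}. The only point requiring care is making sure the $*$-structure is preserved, which is why the square root $B^{1/2}$ must be chosen positive (hence real symmetric) rather than an arbitrary complex square root of $B$.
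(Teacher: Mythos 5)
Your proposal is correct and follows essentially the same route as the paper: the same assignment $z_j\mapsto (B^{1/2}\sigma)_j+\tfrac12 b_j$, the same verification that the image of $Z_a$ satisfies the quadratic relation (your computation of $w^2=a^tBa$ via the Clifford relations is just a tidier packaging of the paper's entrywise calculation), and the same surjectivity-plus-dimension-count argument using Lemma \ref{findim}. Your explicit remarks on why the map is a $*$-homomorphism and why invertibility of $B$ gives surjectivity only fill in details the paper leaves implicit.
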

\begin{proof}  We abbreviate $\left(\frac14 bb^t-A\right)^{\frac12}$ by $C$ and  $\sigma^t Ca  +\frac12 b^ta$ by $c_a$. We  denote the entries of the real symmetric matrix $Caa^tC$ by $q_{ij}$ and compute in ${\rm{Cl}}_n(\C):$ 
{\allowdisplaybreaks \begin{align*} c_a^2 &= \sigma^t C aa^tC\sigma + b^ta \sigma^t Ca +\frac14 (b^ta)^2 \\ &= \sum_{i,j} \sigma_i q_{ij}\sigma_j + b^ta\sigma^tCa +\frac14(b^ta)^2 \\ &= \sum_i q_{ii} + \sum_{i<j} (\underbrace{q_{ij}-q_{ji}}_{=0})\sigma_i\sigma_j +b^ta\sigma^tCa + \frac14 (b^ta)^2 \\ &= {\rm{tr}}(Caa^tC)+ b^ta\sigma^tCa + \frac14 (b^ta)^2\\ &= {\rm{tr}}(a^tC^2a) + b^ta\sigma^tCa + \frac14 (b^ta)^2\\ &=a^t\left(\frac14 bb^t-A \right)a + b^ta\sigma^tCa+\frac14 (b^ta)^2\\ &= \frac12 (b^ta)^2 +b^ta\sigma^tCa -a^tAa\\Ê&= b^ta\cdot  c_a -a^tAa.\end{align*} }
 Now  we define a unital $*$-algebra homomorphism $$\varphi\colon\C\langle z_1,\ldots,z_n\rangle \rightarrow {\rm{Cl}}_n(\C); \quad a_1z_1+\cdots+a_nz_n \mapsto c_a.$$  The ideal $J(p)$ is  in our case   generated by the polynomials $$h_a= (a_1z_1+\cdots +a_nz_n)^2 - b^ta\cdot (a_1z_1+\cdots+a_nz_n) +a^tAa,$$   
so $$\varphi(h_a)= c_a^2 - b^ta\cdot c_a + a^tAa=0.$$ Thus $\varphi$ is well defined on $\A(p)$. In case that $\frac14 bb^t-A$ is invertible, $\varphi$ is onto. So Lemma \ref{findim} finishes the proof, using that the vector space dimension of ${\rm{Cl}}_n(\C)$ is $2^n$.\end{proof}

Now we can prove the main result from this section.

\begin{theorem}\label{quart}
Let $p\in\R[x_1,\ldots,x_n]$ be a quadratic real zero polynomial. Then for $r=2^{\lfloor\frac{n}{2}\rfloor-1},$ $p^r$ has a (hermitian) determinantal representation of size $2^{\lfloor\frac{n}{2}\rfloor}.$
\end{theorem}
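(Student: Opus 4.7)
The plan is to compose the $*$-algebra homomorphism from Proposition~\ref{tocl} with a standard matrix representation of the Clifford algebra, and then invoke Theorem~\ref{rep}(ii) to convert the resulting $*$-representation of $\A(p)$ into a determinantal representation. First, Proposition~\ref{tocl} supplies a unital $*$-algebra homomorphism $\psi \colon \A(p) \to {\rm Cl}_n(\C)$. Next, as recalled in Example~\ref{sphere}, the complex Clifford algebra ${\rm Cl}_n(\C)$ admits an explicit unital $*$-representation $\rho \colon {\rm Cl}_n(\C) \to {\rm M}_k(\C)$ with $k = 2^{\lfloor n/2 \rfloor}$, constructed from the hermitian Brauer--Weyl matrices. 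The composition $\varphi := \rho \circ \psi$ is then a unital $*$-representation of $\A(p)$ of dimension $k$.

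Setting $M_j := \varphi(z_j + J(p))$ produces hermitian matrices, since $z_j^* = z_j$ in $\A(p)$ and both $\psi$ and $\rho$ preserve the involution. Assuming first that $p$ is irreducible over $\R$, Theorem~\ref{rep}(ii) applies directly: it forces $k = rd$ with $d = 2$, hence $r = 2^{\lfloor n/2 \rfloor - 1}$, and one has $p^r = \det(I + x_1 M_1 + \cdots + x_n M_n)$, which is the desired hermitian determinantal representation of size $k$.

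The remaining case is when $p$ is reducible over $\R$. Being quadratic, $p$ then factors as $p = p_1 p_2$ with real linear factors, which can be normalized so that $p_i(0) = 1$. Each $p_i$ has a trivial $1 \times 1$ hermitian determinantal representation, so the $2r \times 2r$ block diagonal matrix polynomial consisting of $r$ copies of the $2 \times 2$ diagonal block ${\rm diag}(p_1, p_2)$ has determinant $p^r$ and the required size $2r = 2^{\lfloor n/2 \rfloor}$. The main point to be mindful of is the irreducibility hypothesis in Theorem~\ref{rep}(ii), which is why the argument splits into two cases; everything else is automatic from the $*$-preservation built into each stage of the construction and from the degree count.
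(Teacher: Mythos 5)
Your proof is correct and follows essentially the same route as the paper's: compose the homomorphism of Proposition~\ref{tocl} with the Brauer--Weyl matrix representation of ${\rm Cl}_n(\C)$ and apply Theorem~\ref{rep}(ii), treating reducible $p$ separately. Your explicit block-diagonal construction in the reducible case simply fills in the detail the paper dismisses as trivial.
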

\begin{proof}
 We have seen in Proposition \ref{tocl} that there is a unital $*$-algebra homomorphism to ${\rm{Cl}}_n(\C)$. But as already described in Example \ref{sphere},  ${\rm{Cl}}_n(\C)$ admits a unital $*$-algebra homomorphism into ${\rm{M}}_{k}(\C)$, with $k=2^{\lfloor\frac{n}{2}\rfloor}$.  So we can apply Theorem \ref{rep} to finish the proof, noting that the case where $p$ is reducible is trivial.
\end{proof}

\begin{corollary}
Let $p\in\R[x_1,\ldots,x_n]$ be a quadratic real zero polynomial. Then $S(p)$ is a spectrahedron. 
\end{corollary}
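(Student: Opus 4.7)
The plan is very short because essentially all of the work has already been done. Given a quadratic real zero polynomial $p$, Theorem \ref{quart} produces an integer $r = 2^{\lfloor n/2\rfloor-1}$ and a hermitian linear matrix polynomial $\mathcal{M}$ of size $2^{\lfloor n/2\rfloor}$ with $\det\mathcal{M} = p^r$. My job is simply to convert this representation of $p^r$ into a spectrahedral description of $S(p)$.

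The key observation is that passing to a power does not change the rigidly convex set: for any $a\in\R^n$, the univariate polynomial $(p^r)_a(t) = p(t\cdot a)^r$ has exactly the same zeros in $\R$, and in particular the same zeros in $[0,1)$, as $p_a(t)$. Hence $S(p^r)=S(p)$ by the very definition of the rigidly convex set.

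Now I invoke Remark \ref{rz}(ii) applied to $\mathcal{M}$: it gives $S(\mathcal{M}) = S(\det\mathcal{M}) = S(p^r) = S(p)$. Therefore $S(p)$ is exhibited as the spectrahedron defined by $\mathcal{M}$, which is what the corollary claims. The only mild caveat to mention is the degenerate case where $p$ is a constant or a product of affine linear factors; in the reducible/degenerate case one gets an even more elementary representation (a diagonal matrix polynomial whose diagonal entries are the linear factors), so there is nothing to prove there. The main (and only) conceptual point is that hermitian determinantal representations of $p^r$ and of $p$ define the same spectrahedron, which is immediate once one recognizes that $S$ depends only on the zero locus of $p_a$ and not on multiplicities.
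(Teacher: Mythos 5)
Your argument is correct and is essentially the paper's own deduction: the corollary is stated without proof precisely because it follows immediately from Theorem \ref{quart} combined with the fact that $S(p^r)=S(p)$ and Remark \ref{rz}(ii), which is exactly the chain you spell out. Your aside about the reducible case is also consistent with the paper, since Theorem \ref{quart} already covers it.
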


\begin{remark}
We can compute the determinantal representations in the setup of Theorem \ref{quart} explicitly. Proposition \ref{tocl} gives an explicit morphism from $\A(p)$ to ${\rm{Cl}}_n(\C)$, and this yields an explicit representation in  ${\rm{M}}_k(\C),$ using for example the Brauer-Weyl matrices (see Examples \ref{ex1} and \ref{ex2} below).
\end{remark}

Also of interest is the question how many different representations for a real zero polynomial exist. Helton, Klep and McCullough \cite{hkmc} have for example characterized equivalent representations in terms of matricial spectrahedra, i.e. spectrahedra defined in $\left({\rm{Sym}}_k(\R)\right)^n,$ instead of $\R^n$ only.
Under a regularity condition on the polynomial, we see that the representations in Theorem \ref{quart} can be described completely, up to unitary equivalence.

\begin{theorem}\label{unique} Let $p=x^tAx +b^tx +1 \in\R[x_1,\ldots,x_n]$ be a quadratic real zero polynomial for which $\frac14 bb^t-A$ is invertible. Set $k=2^{\lfloor\frac{n}{2}\rfloor}.$ 

If $p^r$ has a determinantal representation of size $2r$, for some $r\geq 1$, then $r$ is a positive multiple of $\frac{k}{2}$. After a unitary change of variables, the representation splits into blocks of size $k$, each one representing $p^{\frac{k}{2}}$. 

If $n$ is even, then any two determinantal representations of $p^{\frac{k}{2}}$ of size $k$ are unitarily equivalent. If $n$ is odd then there are precisely two such representations, up to unitary equivalence.
\end{theorem}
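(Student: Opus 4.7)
The whole statement is an exercise in representation theory once Proposition \ref{tocl} is in place. The strategy is to translate determinantal representations of $p^r$ of size $2r$ into finite-dimensional unital $*$-representations of $\A(p)$ via Theorem \ref{rep}, transport these through the isomorphism $\A(p)\cong{\rm Cl}_n(\C)$ given by Proposition \ref{tocl}, and then read off the answer from the Wedderburn structure of the complex Clifford algebra as a finite dimensional $C^*$-algebra.

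\textbf{Step 1: irreducibility of $p$.} To apply Theorem \ref{rep}(ii) I first check that the invertibility of $\tfrac14 bb^t-A$ forces $p$ to be irreducible (for $n\geq 2$). If $p=(1+c_1^tx)(1+c_2^tx)$, expanding shows $b=c_1+c_2$ and $A=\tfrac12(c_1c_2^t+c_2c_1^t)$, so a direct calculation gives $\tfrac14 bb^t-A=\tfrac14(c_1-c_2)(c_1-c_2)^t$, a rank-one matrix, which cannot be invertible when $n\geq 2$. Hence $p$ is irreducible, and Proposition \ref{tocl} upgrades the map $\A(p)\to{\rm Cl}_n(\C)$ to a $*$-isomorphism.

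\textbf{Step 2: Wedderburn structure.} The complex Clifford algebra is, as recalled in Example \ref{sphere}, isomorphic to $M_k(\C)$ for $n$ even and to $M_k(\C)\oplus M_k(\C)$ for $n$ odd, with $k=2^{\lfloor n/2\rfloor}$. As a finite dimensional $C^*$-algebra, every unital $*$-representation decomposes as a direct sum of irreducible $*$-representations; up to unitary equivalence there is exactly one such irreducible class of dimension $k$ in the even case, and exactly two in the odd case (distinguished by the sign with which the central element $\sigma_1\sigma_2\cdots\sigma_n$ acts, so realized concretely from a given one by negating all generators, as already noted at the end of Example \ref{sphere}).

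\textbf{Step 3: block decomposition.} Given a determinantal representation $\mathcal{M}=I+\sum_j x_jM_j$ of $p^r$ of size $2r$, the proof of Theorem \ref{rep}(i) turns $z_j\mapsto M_j$ into a unital $*$-representation $\pi\colon\A(p)\to M_{2r}(\C)$ of dimension $2r$. Decomposing $\pi$ as a direct sum of $m$ irreducible $*$-representations of ${\rm Cl}_n(\C)$ yields $2r=mk$, so $r=m\cdot\tfrac{k}{2}$ is a positive multiple of $\tfrac{k}{2}$. This decomposition is implemented by conjugation by a single unitary matrix, simultaneously bringing every $M_j$ into block-diagonal form with $m$ blocks of size $k$. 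Each block is itself a unital $*$-representation of $\A(p)$ of dimension $k$; by Theorem \ref{rep}(ii), applied with the irreducible $p$ of degree $2$, the associated monic pencil has determinant $p^{k/2}$.

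\textbf{Step 4: counting and main obstacle.} Two determinantal representations of $p^{k/2}$ of size $k$ are unitarily equivalent iff the two $*$-representations of $\A(p)$ they induce are unitarily equivalent, because the generators $z_j+J(p)$ map to the matrices $M_j$ on the nose. So the count of equivalence classes is exactly the count of irreducible $*$-representation classes of ${\rm Cl}_n(\C)$ of dimension $k$: one for $n$ even, two for $n$ odd, as tabulated in Step 2. The main technical point, where care is needed, is to be sure that in the odd case the two classes are genuinely distinct under unitary equivalence on $M_k(\C)$; this is because the $*$-automorphism $\sigma_j\mapsto-\sigma_j$ of ${\rm Cl}_n(\C)$ swaps the two simple summands of $M_k(\C)\oplus M_k(\C)$ and is therefore outer, while unitary conjugation only realizes inner $*$-automorphisms of the target matrix algebra. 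Modulo this observation, everything else is bookkeeping.
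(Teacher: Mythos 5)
Your proposal is correct and takes essentially the same route as the paper's proof: identify $\A(p)$ with ${\rm Cl}_n(\C)$ via Proposition \ref{tocl}, pass between determinantal representations and unital $*$-representations via Theorem \ref{rep}, and read off the block structure and the count of equivalence classes from the representation theory of ${\rm M}_k(\C)$ resp.\ ${\rm M}_k(\C)\oplus{\rm M}_k(\C)$. The only difference is that you fill in details the paper leaves implicit, such as the rank-one computation showing that invertibility of $\frac14 bb^t-A$ forces irreducibility of $p$, and the verification that the two irreducible classes in the odd case are genuinely inequivalent.
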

\begin{proof} Note that the regularity condition implies that $p$ is irreducible.
Now let first $n$ be even.
From Proposition \ref{tocl} we know $\A(p)\cong {\rm{Cl}}_n(\C)\cong {\rm{M}}_k(\C)$.  A determinantal representation of $p^{r}$ of size $2r$ gives rise to a $*$-algebra representation of ${\rm{M}}_k(\C)$ of dimension $2r$. From the classification of $*$-subalgebras of matrix algebras we see that this representation splits into blocks, which are of size $k$ since ${\rm{M}}_k(\C)$ is simple. Finally, since every $*$-automorphism of a matrix algebra is conjugation with a unitary matrix, any two representations of size $k$ are unitarily equivalent.

Now let $n$ be odd. We have $\A(p)\cong{\rm{Cl}}_n(\C)\cong{\rm{M}}_k(\C)\oplus{\rm{M}}_k(\C),$ and this algebra has now precisely two irreducible $*$-representations up to unitary equivalence, both of size $k$. They are for example given by the Brauer-Weyl matrices and their negatives.
\end{proof}

We finish our work with two explicit examples for the above results.

\begin{example}\label{ex1} Consider $q_n= (x_1+\sqrt{2})^2-x_2^2 -\cdots -x_n^2-1$. Writing $q_n=x^tAx +b^ta +1$ we see $$\frac14 bb^t-A=I.$$ The above described homomorphism $\A(q_n)\rightarrow {\rm{Cl}}_n(\C)$ is given by the rule $$z_1+J(q_n)\mapsto \sigma_1 +\sqrt{2} $$ $$ z_j + J(q_n)\mapsto \sigma_j\ \mbox{Êfor }  j=2,\ldots,n.$$ We can substitute the Brauer-Weyl matrices (or their negatives) for the $\sigma_j$ and obtain one or two different representations, depending on whether $n$ is even or odd. Every other representation of some power is equivalent to a block sum of these minimal representations (and possibly trivial blocks, by Theorem \ref{cone}). An explicit example of a minimal representation is $$\det \left(\begin{array}{cccc}1+\sqrt{2}x_1 +x_5 & x_1+ix_3 & x_2+ix_4 & 0 \\x_1-ix_3 & 1+\sqrt{2}x_1 -x_5& 0 & -x_2-ix_4 \\x_2-ix_4 & 0 & 1+\sqrt{2}x_1 -x_5& x_1+ix_3 \\0 & -x_2+ix_4 & x_1-ix_3 & 1 +\sqrt{2}x_1 +x_5\end{array}\right)= q_5^2.$$
\end{example}

\begin{example}\label{ex2} Consider $\widetilde{p}_n= (x_0+1)^2 -x_1^2-\cdots-x_n^2$. Writing $\widetilde{p}_n= x^tAx +b^tx +1$ we see $$\frac14 bb^t-A= \left(\begin{array}{c|c}0 & 0 \\\hline 0 & I_n\end{array}\right),$$ and  the homomorphism $\A(\widetilde{p}_n)\rightarrow {\rm{Cl}}_{n+1}(\C)$ is given by the rule $$z_0\mapsto 1, z_j\mapsto\sigma_j \mbox{ for } j=1,\ldots,n.$$ As above this leads to representations, for example $$\det \left(\begin{array}{cccc}1+x_0 & x_1+ix_3 & x_2+ix_4 & 0 \\x_1-ix_3 & 1+x_0 & 0 & -x_2-ix_4 \\x_2-ix_4 & 0 & 1+x_0 & x_1+ix_3 \\0 & -x_2+ix_4 & x_1-ix_3 & 1+x_0\end{array}\right)=\widetilde{p}_4^2.$$
\end{example}

\section*{Acknowledgements}

We wish to thank Claus Scheiderer and Markus Schweighofer for valuable comments, especially on the algebra from Section \ref{algebra}.

\begin{bibdiv}
\begin{biblist}

\bib{bcr}{book}{
    AUTHOR = {J. Bochnak and M. Coste and M.-F. Roy},
     TITLE = {Real algebraic geometry},
    SERIES = {Ergebnisse der Mathematik und ihrer Grenzgebiete},
    VOLUME = {36},
  PUBLISHER = {Springer},
   ADDRESS = {Berlin},
      YEAR = {1998},
     PAGES = {x+430},
}

\bib{bran}{article}{
	AUTHOR = {P. Br\"and\'{e}n},
	TITLE = {Obstructions to determinantal representability},
	JOURNAL = {Preprint},
	YEAR = {2010},
}

\bib{brwe}{article}{
    AUTHOR = {R. Brauer and H. Weyl},
     TITLE = {Spinors in {$n$} {D}imensions},
   JOURNAL = {Amer. J. Math.},
  FJOURNAL = {American Journal of Mathematics},
    VOLUME = {57},
      YEAR = {1935},
    NUMBER = {2},
     PAGES = {425--449},
      ISSN = {0002-9327},
}

\bib{ch}{article}{
    AUTHOR = {L.N. Childs},
     TITLE = {Linearizing of {$n$}-ic forms and generalized {C}lifford
              algebras},
   JOURNAL = {Linear and Multilinear Algebra},
  FJOURNAL = {Linear and Multilinear Algebra},
    VOLUME = {5},
      YEAR = {1977/78},
    NUMBER = {4},
     PAGES = {267--278},
}

\bib{frs}{article}{
    AUTHOR = {S. Friedland and J.W. Robbin and J.H. Sylvester},
     TITLE = {On the crossing rule},
   JOURNAL = {Comm. Pure Appl. Math.},
  FJOURNAL = {Communications on Pure and Applied Mathematics},
    VOLUME = {37},
      YEAR = {1984},
    NUMBER = {1},
     PAGES = {19--37},
}

\bib{he}{article}{
    AUTHOR = {N. Heerema},
     TITLE = {An algebra determined by a binary cubic form},
   JOURNAL = {Duke Math. J.},
  FJOURNAL = {Duke Mathematical Journal},
    VOLUME = {21},
      YEAR = {1954},
     PAGES = {423--443},
     }

\bib{hkmc}{article}{
	AUTHOR = { J.W.Helton and I. Klep and S. McCullough},
	TITLE ={ The matricial relaxation of a linear matrix polynomial},
	JOURNAL={Preprint},
	YEAR={2010},
}

\bib{hemcvi}{article}{
    AUTHOR = {J.W. Helton and S. McCullough and V. Vinnikov},
     TITLE = {Noncommutative convexity arises from linear matrix
              inequalities},
   JOURNAL = {J. Funct. Anal.},
  FJOURNAL = {Journal of Functional Analysis},
    VOLUME = {240},
      YEAR = {2006},
    NUMBER = {1},
     PAGES = {105--191},
}

\bib{hevi}{article}{
    AUTHOR = {J.W. Helton and V. Vinnikov},
     TITLE = {Linear matrix inequality representation of sets},
   JOURNAL = {Comm. Pure Appl. Math.},
  FJOURNAL = {Communications on Pure and Applied Mathematics},
    VOLUME = {60},
      YEAR = {2007},
    NUMBER = {5},
     PAGES = {654--674},
}


\bib{lepara}{article}{
    AUTHOR = {A.S. Lewis and P. Parrilo and M.V. Ramana},
     TITLE = {The {L}ax conjecture is true},
   JOURNAL = {Proc. Amer. Math. Soc.},
  FJOURNAL = {Proceedings of the American Mathematical Society},
    VOLUME = {133},
      YEAR = {2005},
    NUMBER = {9},
     PAGES = {2495--2499 (electronic)},
}

\bib{lora}{article}{
    AUTHOR = {R. Loewy and N. Radwan},
     TITLE = {Spaces of symmetric matrices of bounded rank},
   JOURNAL = {Linear Algebra Appl.},
  FJOURNAL = {Linear Algebra and its Applications},
    VOLUME = {197/198},
      YEAR = {1994},
     PAGES = {189--215},
}

\bib{mes}{article}{
    AUTHOR = {R. Meshulam},
     TITLE = {On two extremal matrix problems},
   JOURNAL = {Linear Algebra Appl.},
  FJOURNAL = {Linear Algebra and its Applications},
    VOLUME = {114/115},
      YEAR = {1989},
     PAGES = {261--271},
}


\bib{nu}{article}{
    AUTHOR = {Nuij, Wim},
     TITLE = {A note on hyperbolic polynomials},
   JOURNAL = {Math. Scand.},
  FJOURNAL = {Mathematica Scandinavica},
    VOLUME = {23},
      YEAR = {1968},
     PAGES = {69--72 (1969)},
 }

\bib{pa}{article}{
    AUTHOR = {C.J. Pappacena},
     TITLE = {Matrix pencils and a generalized {C}lifford algebra},
   JOURNAL = {Linear Algebra Appl.},
  FJOURNAL = {Linear Algebra and its Applications},
    VOLUME = {313},
      YEAR = {2000},
    NUMBER = {1-3},
     PAGES = {1--20},
}


\bib{qu}{article}{
	AUTHOR = {R. Quarez},
	TITLE = {Symmetric Determinantal Representation of Polynomials},
	JOURNAL ={Preprint},
}

\bib{rago}{article}{
    AUTHOR = {M. Ramana and A.J. Goldman},
     TITLE = {Some geometric results in semidefinite programming},
   JOURNAL = {J. Global Optim.},
  FJOURNAL = {Journal of Global Optimization. An International Journal
              Dealing with Theoretical and Computational Aspects of Seeking
              Global Optima and Their Applications in Science, Management
              and Engineering},
    VOLUME = {7},
      YEAR = {1995},
    NUMBER = {1},
     PAGES = {33--50},
}


\bib{ro}{article}{
    AUTHOR = {N. Roby},
     TITLE = {Alg\`ebres de {C}lifford des formes polynomes},
   JOURNAL = {C. R. Acad. Sci. Paris S\'er. A-B},
    VOLUME = {268},
      YEAR = {1969},
     PAGES = {A484--A486},
}

\bib{wo}{book}{
     TITLE = {Handbook of semidefinite programming},
    SERIES = {International Series in Operations Research \& Management
              Science, 27},
    EDITOR = {H. Wolkowicz and R. Saigal and L. Vandenberghe},
      NOTE = {Theory, algorithms, and applications},
 PUBLISHER = {Kluwer Academic Publishers},
   ADDRESS = {Boston, MA},
      YEAR = {2000},
     PAGES = {xxviii+654},
}
\end{biblist}
\end{bibdiv}
 
\end{document}